\crefname{equation}{}{}
\newtheorem{theorem}{Theorem}
\newtheorem{lemma}[theorem]{Lemma}
\newtheorem{definition}[theorem]{Definition}
\newtheorem{proposition}[theorem]{Proposition}
\theoremstyle{definition}
\newcommand{\R}{\mathbb{R}}
\newcommand{\N}{\mathbb{N}}
\newcommand{\funcmin}{f_{\min}}
\newcommand{\x}{\mathbf{x}}
\newcommand{\y}{\mathbf{y}}
\newcommand{\upbound}[3]{\mathrm{ub}(#1, \,#2)_{{#3}}}
\newcommand{\upboundpf}[3]{\mathrm{ub}_\#(#1, \,#2)_{{#3}}}
\renewcommand{\epsilon}{\varepsilon}
\newcommand{\mainset}{\mathbf{X}}
\newcommand*\dif{\mathop{}\!\mathrm{d}}
\newcommand{\LSLpar}{\alpha}
\newcommand{\gaus}{\Gamma}
\newcommand{\multf}{\vec{f}}
\title[Nonconvergence of a measure-based SOS-hierarchy]{Nonconvergence of a sum-of-squares hierarchy for global polynomial optimization based on push-forward measures}
\author{Lucas Slot$^1$}
\author{Manuel Wiedmer$^1$}
\address{$^1$ETH Zurich}
\email{\{lucas.slot, manuel.wiedmer\}@inf.ethz.ch}
\thanks{This work is supported by funding from the European Research Council
(ERC) under the European Union’s Horizon 2020 research and innovation programme (grant
agreement No 815464)}
\keywords{polynomial optimization; sums of squares; semidefinite programming; push-forward measure; global optimization}
\subjclass{90C22, 90C23, 90C26}
\date{August 16, 2024}
\begin{document}

\begin{abstract}
Let $\mainset \subseteq \R^n$ be a closed set, and consider the problem of computing the minimum $\funcmin$ of a polynomial $f$ on $\mainset$. Given a measure $\mu$ supported on $\mainset$, Lasserre (SIAM J. Optim. 21(3), 2011) proposes a decreasing sequence of upper bounds on $\funcmin$, each of which may be computed by solving a semidefinite program. When~$\mainset$ is compact, these bounds converge to $\funcmin$ under minor assumptions on $\mu$. Later, Lasserre (Math. Program. 190, 2020) introduces a related, but far more economical sequence of upper bounds which rely on the push-forward measure of $\mu$ by $f$. While these new bounds are weaker \emph{a priori}, they actually achieve similar asymptotic convergence rates on compact sets. In this work, we show that no such free lunch exists in the non-compact setting. While convergence of the standard bounds to $\funcmin$ is guaranteed when $\mainset = \R^n$ and $\mu$ is a Gaussian distribution, we prove that the bounds relying on the push-forward measure fail to converge to $\funcmin$ in that setting already for polynomials of degree~$6$. 
\end{abstract}

\maketitle

\section{Introduction}
We consider the problem of minimizing a given $n$-variate polynomial $f \in \R[\x]$ over a closed set $\mainset \subseteq \R^n$, i.e., of determining the parameter:
\begin{equation} \label{EQ:mainprob}
    \funcmin := \min_{\x \in \mainset} f(\x).
\end{equation}
Here and throughout, we assume that $f$ attains its minimum on $\mainset$, i.e., that there is an $\x^* \in \mainset$ such that $\funcmin = f(\x^*)$. Computing $\funcmin$ is generally hard. Among other classical optimization problems, it includes \textsc{MaxCut} and \textsc{StableSet}. For a reference on polynomial optimization and its
many applications, we refer to~\cite{Lasserre:book1, Lasserre:book2}.

Given a finite Borel measure $\mu$ supported on~$\mainset$, Lasserre~\cite{Lasserre:upbound} introduces a sequence of upper bounds $\upbound{f}{\mu}{r} \geq \funcmin$, defined for each $r \in \N$ by
\begin{equation} \label{EQ:ub}
    \upbound{f}{\mu}{r} := \min_{\sigma \in \Sigma[\x]_{2r}} \left\{ \int_{\mainset} f(\x) \sigma(\x) \dif \mu(\x) : \int_{\mainset} \sigma(\x) \dif \mu(\x) = 1 \right\}.
\end{equation}
Here, $\Sigma[\x]_{2r}$ is the cone of sums of squares of polynomials of degree at most $r$. 
Assuming knowledge of the moments of $\mu$, the bound $\upbound{f}{\mu}{r}$ can be computed by solving a semidefinite program involving matrices of size ${\binom{n+r}{r}}$. For fixed $r \in \N$, these are thus of polynomial size in $n$.

\subsection*{Relation to the moment-SOS hierarchy}
The bounds~\eqref{EQ:ub} are related to, but distinct from, the \emph{lower} bounds obtained from the moment-SOS hierarchy~\cite{Lasserre:lowerbound}. In~\eqref{EQ:ub}, we \emph{restrict} the cone of probability measures on $\mainset$ to those whose density w.r.t. $\mu$ is a low-degree sum of squares. In the moment-SOS hierarchy, one \emph{relaxes} this cone to pseudo-distributions, being linear functionals that are nonnegative on low-degree sums of squares. For details, see Section~1.4 in the survey of de Klerk and Laurent~\cite{deKlerkLaurent:survey}.

\subsection*{Asymptotics on compact sets} The asymptotic behavior of the upper bounds as $r \to \infty$ has been studied extensively in the case where $\mainset$ is a compact set~\cite{deKlerkHessLaurent:hypercube,  deKlerkLaurent:annealing, deKlerkLaurent:ubworst, deKlerkLaurent:sphere,deKlerkLaurentSun:upbounds,Lasserre:upbound, SlotLaurent:upbound, SlotLaurent:upbound2}.  The state-of-the art can be summarized as follows.

\begin{theorem}[\cite{Lasserre:upbound}]
If $\mainset \subseteq \R^n$ is compact, and $\mu$ is a positive Borel measure with ${\mathrm{supp}(\mu) = \mainset}$, then we have
\[
\lim_{r \to \infty} \upbound{f}{\mu}{r} = f_{\min}.
\]
\end{theorem}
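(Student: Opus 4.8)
The plan is to establish the two-sided bound
\[
\funcmin \;\le\; \upbound{f}{\mu}{r} \quad\text{for all } r, \qquad\text{and}\qquad \limsup_{r\to\infty}\upbound{f}{\mu}{r}\;\le\;\funcmin,
\]
which together (using that $r\mapsto\upbound{f}{\mu}{r}$ is non-increasing, since $\Sigma[\x]_{2r}\subseteq\Sigma[\x]_{2(r+1)}$) give the claimed convergence. The lower bound is immediate: if $\sigma\in\Sigma[\x]_{2r}$ is feasible in \eqref{EQ:ub}, then $\sigma\dif\mu$ is a probability measure supported on $\mainset$, so $\int_{\mainset}f\sigma\dif\mu\ge\funcmin\int_{\mainset}\sigma\dif\mu=\funcmin$.

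For the upper bound, fix $\epsilon>0$. Replacing $f$ by $f-\funcmin+1$ shifts $\upbound{f}{\mu}{r}$ and $\funcmin$ by the same constant and so does not affect convergence; hence we may assume $f\ge 1$ on $\mainset$, in particular $\funcmin=1>0$. Let $\x^*\in\mainset$ attain the minimum. The idea is to exhibit, for a sequence of degrees, an explicit sum-of-squares density that concentrates its $\mu$-mass in the sublevel set $S_1:=\{\x\in\mainset:f(\x)\le\funcmin+\epsilon\}$. Since $\mainset$ is compact, fix $D>0$ with $\|\x-\x^*\|^2\le D$ on $\mainset$, and set $g(\x):=1-\|\x-\x^*\|^2/D$, a quadratic with $0\le g\le 1$ on $\mainset$ and $g(\x^*)=1$. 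By continuity of $f$ there is $\delta>0$ such that $\{\x\in\mainset:\|\x-\x^*\|<\delta\}\subseteq S_1$; hence $S_2:=\mainset\setminus S_1$ is contained in $\{\x\in\mainset:\|\x-\x^*\|\ge\delta\}$, where $g\le b:=1-\delta^2/D<1$, whereas on the smaller ball $\{\|\x-\x^*\|<\delta/2\}$ we have $g\ge a:=1-\delta^2/(4D)$, with $b<a<1$.

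Now take $\sigma_r:=(g^r)^2\in\Sigma[\x]_{4r}$, which is feasible (after normalization) in the program defining $\upbound{f}{\mu}{2r}$. Splitting the numerator over $S_1$ and $S_2$, and using $g^{2r}\ge 0$ together with $f\le\funcmin+\epsilon$ on $S_1$, $f\le\funcmax$ on $S_2$, and $\funcmin+\epsilon\ge 0$, one gets
\[
\int_{\mainset}f\,\sigma_r\dif\mu \;\le\; (\funcmin+\epsilon)\int_{\mainset}\sigma_r\dif\mu \;+\; \funcmax\int_{S_2}\sigma_r\dif\mu,
\]
and therefore $\upbound{f}{\mu}{2r}\le \funcmin+\epsilon+\funcmax\cdot\big(\int_{S_2}\sigma_r\dif\mu\big)\big/\big(\int_{\mainset}\sigma_r\dif\mu\big)$. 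Here $\int_{S_2}\sigma_r\dif\mu\le b^{2r}\mu(\mainset)$, while $\int_{\mainset}\sigma_r\dif\mu\ge a^{2r}\,\mu\big(\{\x\in\mainset:\|\x-\x^*\|<\delta/2\}\big)=:a^{2r}c_0$, and $c_0>0$ because $\x^*\in\mainset=\mathrm{supp}(\mu)$, so every open neighbourhood of $\x^*$ has positive $\mu$-measure. Thus the ratio is bounded by $(\mu(\mainset)/c_0)(b/a)^{2r}\to 0$ as $r\to\infty$, giving $\limsup_{r\to\infty}\upbound{f}{\mu}{r}\le\funcmin+\epsilon$ (monotonicity lets us pass from the subsequence of even indices to the full sequence). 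Letting $\epsilon\to0$ completes the proof.

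The argument is elementary and I do not expect a serious obstacle; the only place the hypotheses are genuinely used is the final ratio estimate. There, $\mathrm{supp}(\mu)=\mainset$ is what guarantees $c_0>0$, and compactness of $\mainset$ is what lets us build the polynomial ``bump'' $g$ taking values in $[0,1]$ with a strict gap $b<a$ between the bad region $S_2$ and a neighbourhood of $\x^*$. The one point to handle carefully is the bookkeeping with signs in the numerator split, which we avoid altogether by first normalizing $f\ge 1$.
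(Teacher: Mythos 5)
Your proof is correct. Note that the paper itself offers no proof of this statement---it is quoted from Lasserre's 2011 paper---so the comparison is with Lasserre's original argument rather than anything in this text. Your route is genuinely different and more elementary: you exhibit an explicit sum-of-squares ``bump'' $\sigma_r = g^{2r}$ with $g$ a quadratic peaking at $\x^*$, and win by the geometric ratio $(b/a)^{2r}\to 0$ between the sublevel set and its complement; this is in the spirit of the explicit constructions used in the later quantitative works cited in the paper (e.g.\ \Cref{THM:r2}), though your bump only yields a rate depending on $\epsilon$ through $\delta$ rather than the sharp $O(\log^2 r/r^2)$. Lasserre's proof instead goes through a soft functional-analytic characterization: if $\int_\mainset f\sigma\dif\mu \ge c \int_\mainset \sigma\dif\mu$ for all SOS $\sigma$, then by density of polynomials in $L^2(\mu)$ on the compact set $\mainset$ one gets $f \ge c$ $\mu$-a.e., hence on $\mathrm{supp}(\mu)=\mainset$; that argument buys generality (it is what extends to non-compact supports under moment conditions, as in \Cref{THM:generalconvergence}) at the cost of being non-constructive. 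Two small points in your write-up, neither a gap: you implicitly use $\mu(\mainset)<\infty$ (automatic for the finite Borel measures the paper works with, and needed anyway for \eqref{EQ:ub} to make sense), and if $\delta^2 > D$ your $b$ is negative---but then $S_2=\emptyset$ and the error term vanishes, or one simply shrinks $\delta$; the degree bookkeeping ($\sigma_r\in\Sigma[\x]_{4r}$, hence feasible for $\upbound{f}{\mu}{2r}$, with monotonicity filling in odd indices) is handled correctly.
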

\begin{theorem}[\cite{SlotLaurent:upbound2}] \label{THM:r2}
If $\mainset \subseteq \R^n$ is compact with a minor geometric assumption, and $\mu=\lambda|_{\mainset}$ is the restriction of the Lebesgue measure to $\mainset$, then we have 
\[
\upbound{f}{\mu}{r} - f_{\min} =  O\left(\frac{\log^2 r}{r^2}\right).
\]
In addition, this result is best-possible up to log-factors. 
\end{theorem}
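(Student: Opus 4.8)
We prove the upper bound $\upbound{f}{\mu}{r}-\funcmin=O(\log^2 r/r^2)$ by exhibiting, for every $r$, a feasible sum of squares in~\eqref{EQ:ub}, and then we establish a matching lower bound on a one-dimensional example. After subtracting a constant we may assume $\funcmin=0$; fix a minimizer $\x^*\in\mainset$ and set $g:=f$, so that $g\ge 0$ on $\mainset$ and $g(\x^*)=0$. Since $\upbound{f}{\mu}{r}\le \funcmin+\int_{\mainset} g\,\sigma\dif\mu$ for any feasible $\sigma$, it suffices to construct $\sigma\in\Sigma[\x]_{2r}$ with $\int_{\mainset}\sigma\dif\mu=1$ and $\int_{\mainset} g\,\sigma\dif\mu=O(\log^2 r/r^2)$.

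The building block is the Christoffel--Darboux kernel of $\mu$ at $\x^*$: for an $L^2(\mu)$-orthonormal basis $\{P_\alpha\}$ of the polynomials of degree at most $s$, let $K_s(\x,\y):=\sum_\alpha P_\alpha(\x)P_\alpha(\y)$; then $\x\mapsto K_s(\x,\x^*)^2/K_s(\x^*,\x^*)$ is the square of a degree-$s$ polynomial that integrates to $1$ against $\mu$ by the reproducing property, hence is feasible whenever $s\le r$. This kernel concentrates at $\x^*$ at the right scale but decays only polynomially, so the plan is to take $s$ slightly below $r$ and to sharpen it — by raising it to a power and/or multiplying by the square of an auxiliary ``needle'' polynomial that is $\Theta(1)$ on a ball $N:=\{\x:\|\x-\x^*\|\le\delta\}$ and super-polynomially small off $N$ — using the remaining degrees; the resulting $\sigma$ is still feasible. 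We take $\delta=\Theta((\log r)/r)$ and split $\int_{\mainset} g\,\sigma\dif\mu$ over $N$ and $\mainset\setminus N$.

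On $\mainset\setminus N$ we bound $g$ by $\max_{\mainset}g$ and use the decay of $\sigma$ together with a lower bound on the Christoffel function $K_s(\x^*,\x^*)^{-1}$ to make this term $O(r^{-C})$ for every $C$. The substance is on $N$. Here the minor geometric assumption guarantees that, after an affine change of coordinates, $\mainset$ locally contains a fixed model domain (a box, half-space, or cone with face or apex at $\x^*$), so that $K_s$ can be compared with the Christoffel--Darboux kernel of Lebesgue measure on that model, whose \emph{localisation} behaviour is known sharply: the probability measure $\sigma\dif\mu$ then spreads on scale $\Theta(1/r)$ in directions tangent to $\mainset$ at $\x^*$ and on scale $\Theta(1/r^2)$ in the inward-normal direction. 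Because $\x^*$ minimises $g$ over $\mainset$, first-order optimality forces $\nabla g(\x^*)$ to vanish along every feasible direction, so on $N$ one has $g(\x)=O\big(\|(\x-\x^*)_{\mathrm{tan}}\|^2+|(\x-\x^*)_{\mathrm{nor}}|\big)$; integrating against $\sigma$ produces a contribution of order $(1/r)^2$ from the tangential part and $1/r^2$ from the normal part, and the logarithmic slack in $\delta$ — needed both to kill the polynomially decaying tail and to realise the $1/r^2$ normal scale at a boundary point through the cone condition — turns this into $O(\log^2 r/r^2)$. Summing the two parts gives the bound. I expect the sharp localisation estimates for $K_s$ over a \emph{general} compact $\mainset$ (rather than a cube, ball, or simplex with explicit orthogonal polynomials), and the boundary case in particular, to be the main difficulty: without exploiting first-order optimality along the boundary, only the Lipschitz bound $g(\x)=O(\|\x-\x^*\|)$ is available, and it would give merely $O(1/r)$.

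For the ``best possible up to log-factors'' part, I would take $\mainset=[-1,1]\subseteq\R$, $f(x)=x^2$, and $\mu$ the Lebesgue measure, so $\funcmin=0$. A short extreme-point argument identifies $\upbound{f}{\mu}{r}$ with $\min\{\int_{-1}^{1}x^2 p(x)^2\dif x: \deg p\le r,\ \int_{-1}^{1}p(x)^2\dif x=1\}$, i.e.\ the smallest eigenvalue of the compression to polynomials of degree at most $r$ of multiplication by $x^2$ on $L^2([-1,1])$. Classical orthogonal-polynomial and Markov-type estimates — encoding that a degree-$r$ polynomial cannot carry a positive fraction of its $L^2$-mass on an interval about $0$ of length $o(1/r)$ — show that this quantity is $\Theta(1/r^2)$, whence $\upbound{f}{\mu}{r}-\funcmin=\Omega(1/r^2)$, matching the upper bound up to the $\log^2 r$ factor.
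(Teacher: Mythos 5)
This theorem is not proved in the paper at all: it is quoted verbatim from the cited reference \cite{SlotLaurent:upbound2} as background, so there is no in-paper proof to compare your attempt against. Judged on its own, your proposal correctly reconstructs the strategy of that reference at a high level --- subtract $\funcmin$, build a feasible density concentrated at a minimizer $\x^*$ from Christoffel--Darboux kernels and/or ``needle'' polynomials, split the integral over a shrinking neighbourhood $N$ and its complement, and match with a univariate $\Theta(1/r^2)$ eigenvalue lower bound --- but as written it has genuine gaps that you yourself partly flag.

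The main one is that everything rests on quantitative localisation of the kernel (equivalently, on decay estimates for the needle polynomial off $N$) for a \emph{general} compact $\mainset$ satisfying only the ``minor geometric assumption'', and you supply no such estimate; this is precisely the technical core of the cited proof, which uses Kro\'o-type needle polynomials whose decay at distance $\delta$ from $\x^*$ is of order $e^{-cr\delta}$ at interior points but only $e^{-cr\sqrt{\delta}}$ at boundary points (under an interior cone condition). Second, your tangential/normal decomposition $g(\x)=O(\|(\x-\x^*)_{\mathrm{tan}}\|^2+|(\x-\x^*)_{\mathrm{nor}}|)$ via first-order optimality presupposes a smooth boundary with two-sidedly feasible tangent directions; at a vertex or corner of $\mainset$ no such directions exist and only the Lipschitz bound $g(\x)=O(\|\x-\x^*\|)$ is available. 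The actual argument handles this by taking $\delta=\Theta(\log^2 r/r^2)$ at boundary minimizers (so that the Lipschitz bound alone already gives $O(\log^2 r/r^2)$, the $r^{-2}$ scale coming from the $\sqrt{\delta}$ in the boundary decay estimate) and $\delta=\Theta(\log r/r)$ with a second-order Taylor bound at interior critical points; your single choice $\delta=\Theta(\log r/r)$ does not cover the boundary case. The lower-bound sketch (smallest eigenvalue of the truncated multiplication operator by $x^2$ on $L^2([-1,1])$ being $\Theta(1/r^2)$) is sound in outline, though it too relies on an orthogonal-polynomial estimate you assert rather than prove.
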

For certain distinguished choices of $\mainset$, including the hypercube~\cite{deKlerkLaurent:ubworst}, the hypersphere~\cite{deKlerkLaurent:sphere} and the standard simplex~\cite{SlotLaurent:upbound}, the convergence rate of~\Cref{THM:r2} can be improved to $O(1/r^2)$, and this rate is known to be best-possible.

\subsection*{Push-forward measures}
Even for moderate values of~$n, r$, computation of the bound~$\upbound{f}{\mu}{r}$ is quite demanding. Lasserre~\cite{Lasserre:upboundpf} introduces a more economical version of the upper bounds $\upbound{f}{\mu}{r}$ as follows:
\begin{equation} \label{EQ:ubpf}
	\upboundpf{f}{\mu}{r} := \min_{s \in \Sigma[x]_{2r}} \left\{ \int_\mainset f(\x)s(f(\x)) \dif \mu(\x) : \int_\mainset s(f(\x)) \dif \mu(\x) = 1 \right\}.
\end{equation}
Note that the optimization is now over \emph{univariate} sums of squares $s \in \Sigma[x]_{2r}$. With respect to~\eqref{EQ:ub}, we thus restrict the density $\sigma \in \Sigma[\x]_{2r}$ to be of the form $\sigma = s \circ f$.
The program above can also be formulated in terms of the \emph{push-forward measure} $f_\#\mu$ of $\mu$ by $f$. This measure on~$\R$ is defined via
\[
    f_\#\mu(A) := \mu(f^{-1}(A)) \quad \text{for all } A \subseteq \R \text{ measurable.}
\]
Indeed, we then have
\begin{equation} \label{EQ:ubpfuni}
\upboundpf{f}{\mu}{r} = 
\min_{s \in \Sigma[x]_{2r}} \left\{ \int_\R x s(x) \dif f_\#\mu(x) : \int_\R s(x) \dif f_\#\mu(x) = 1 \right\}.
\end{equation}
These \emph{push-forward bounds} can be computed by solving an SDP\footnote{Formulating this SDP requires knowledge of the moments of the push-forward measure $f_\#\mu$. These can be obtained from the moments of $\mu$, altough this comes at a computational cost, see~\cite{Lasserre:upboundpf}.} which involves matrices of size only $r+1$ (as opposed to ${\binom{n + r}{r}}$). This represents a substantial computational advantage.
On the other hand, writing $d = \mathrm{deg}(f)$, it follows directly from the definition that
\[
\upboundpf{f}{\mu}{r} \geq \upbound{f}{\mu}{r \cdot d} \geq \funcmin.
\]
The push-forward bounds $\upboundpf{f}{\mu}{r}$ are thus asymptotically weaker than the standard upper bounds $\upbound{f}{\mu}{r}$ \emph{a priori}. However, and somewhat surprisingly, they actually achieve the same convergence rate on compact sets.
\begin{theorem}[Free lunch~\cite{SlotLaurent:upbound2}] \label{THM:freelunch}
If $\mainset \subseteq \R^n$ is compact with the geometric assumption of~\Cref{THM:r2}, and~$\mu=\lambda|_{\mainset}$ is the restriction of the Lebesgue measure to $\mainset$, then we have 
\[
\upboundpf{f}{\mu}{r} - f_{\min} =  O\left(\frac{\log^2 r}{r^2}\right).
\]
\end{theorem}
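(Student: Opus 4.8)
My plan is to observe that $\upboundpf{f}{\mu}{r}$ is secretly the \emph{standard} Lasserre bound for a one-dimensional minimization problem, and then to estimate that bound directly via a univariate sum-of-squares ``needle'', the only geometric input being a lower bound on how much mass the push-forward measure puts near $\funcmin$.

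\textbf{Step 1: Reduction to dimension one.} Set $\nu := f_\#\mu$ and let $g \in \R[x]$ be the identity polynomial, $g(x) = x$. Since $\mainset$ is compact, $f(\mainset) \subseteq [\funcmin, \funcmax]$ with $\funcmax < \infty$, so $\nu$ is a finite positive Borel measure with $\mathrm{supp}(\nu) \subseteq [\funcmin, \funcmax]$ and $\funcmin \in \mathrm{supp}(\nu)$ (as $\x^* \in \mainset$). Comparing~\eqref{EQ:ubpfuni} with the definition~\eqref{EQ:ub} (in dimension $n=1$, for the polynomial $g$ and the measure $\nu$) gives
\[
\upboundpf{f}{\mu}{r} \;=\; \upbound{g}{\nu}{r},
\]
the standard bound of order $r$ for minimizing $g(x) = x$ over $[\funcmin,\funcmax]$ with respect to $\nu$; here $\min_{x \in [\funcmin,\funcmax]} g(x) = \funcmin$, attained at the left endpoint. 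Thus everything reduces to a univariate estimate for the standard bound, where --- crucially --- the measure $\nu$ is essentially arbitrary, subject only to a condition at the minimizing endpoint.

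\textbf{Step 2: Mass of $\nu$ near $\funcmin$.} I would next show that for some constants $c, \gamma > 0$,
\[
\nu\big([\funcmin, \funcmin + t]\big) \;\ge\; c\, t^{\gamma} \qquad \text{for all small } t > 0 .
\]
As a polynomial, $f$ is $L$-Lipschitz on the compact set $\mainset$, so $\{\x \in \mainset : f(\x) \le \funcmin + t\} \supseteq \{\x \in \mainset : \|\x - \x^*\| \le t/L\}$. The geometric assumption of~\Cref{THM:r2} --- a uniform interior-ball (``fatness'') condition holding at every point of $\mainset$, in particular at $\x^*$ --- yields $\mathrm{vol}(\{\x \in \mainset : \|\x - \x^*\| \le \rho\}) \ge c'\rho^n$ for all small $\rho$, whence, using $\mu = \lambda|_{\mainset}$,
\[
\nu\big([\funcmin, \funcmin + t]\big) \;=\; \mathrm{vol}\big(\{\x \in \mainset : f(\x) \le \funcmin + t\}\big) \;\ge\; c'\, (t/L)^n ,
\]
so one may take $\gamma = n$ (no sharpness is needed; any fixed power suffices).

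\textbf{Step 3: A univariate needle, and optimizing a parameter.} Fix $\delta \in (0, (\funcmax - \funcmin)/2)$. Using the classical growth of Chebyshev polynomials outside $[-1,1]$, one builds $p \in \R[x]$ of degree $r$ with $|p| \ge \tfrac12$ on $[\funcmin, \funcmin + \delta/2]$ and $|p| \le \eta := 2\exp(-c_0 r\sqrt{\delta})$ on $[\funcmin + \delta, \funcmax]$, where $c_0 > 0$ depends only on $\funcmax - \funcmin$ (concretely: a shifted, rescaled $T_r$ mapping $[\funcmin+\delta,\funcmax]$ onto $[-1,1]$, renormalized to take value $1$ at $\funcmin + \delta/2$). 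Feeding $s = p^2 \in \Sigma[x]_{2r}$ into $\upbound{g}{\nu}{r}$ and splitting the numerator at $\funcmin + \delta$,
\[
\upboundpf{f}{\mu}{r} - \funcmin \;\le\; \frac{\int (x - \funcmin)\, p(x)^2 \dif\nu(x)}{\int p(x)^2 \dif\nu(x)} \;\le\; \delta \;+\; \frac{(\funcmax - \funcmin)\,\eta^2\,\nu(\R)}{\tfrac14\,\nu([\funcmin,\funcmin+\delta/2])} \;\le\; \delta \;+\; C\, \delta^{-\gamma}\, e^{-2 c_0 r\sqrt{\delta}},
\]
by Step 2. Taking $\delta = \Theta(\log^2 r / r^2)$ with a large enough implied constant (depending on $\gamma, c_0$) forces $e^{2 c_0 r\sqrt{\delta}} \ge \delta^{-(\gamma+1)}$ for $r$ large, so the second term is $O(\delta)$ and we obtain $\upboundpf{f}{\mu}{r} - \funcmin = O(\log^2 r / r^2)$.

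\textbf{Main obstacle.} The crux is the needle of Step 3: one must produce a degree-$r$ polynomial that is \emph{simultaneously} bounded below by a constant on a whole subinterval adjacent to $\funcmin$ and exponentially small (exponent of order $r\sqrt{\delta}$, the scaling appropriate to $g(x) = x$ near an endpoint) on the complement, and then check that the factor $\delta^{-\gamma}$ --- arising because the push-forward density may degenerate at $\funcmin$ --- costs only a logarithmic inflation of $\delta$. That trade-off is exactly what creates the $\log^2 r$, which is why the push-forward bounds match~\Cref{THM:r2} but cannot beat it. (One could instead quote a ready-made univariate estimate for $\upbound{g}{\nu}{r}$ under a mass condition $\nu([\funcmin, \funcmin + t]) \ge c\,t^\gamma$ and combine it with Steps 1--2; I prefer the explicit route since it pinpoints where the logarithms enter.)
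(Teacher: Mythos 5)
Theorem~\ref{THM:freelunch} is quoted from the reference \cite{SlotLaurent:upbound2} and the present paper contains no proof of it, so there is nothing internal to compare against; measured against the cited source, your argument is correct and follows essentially the same route: identify $\upboundpf{f}{\mu}{r}$ via~\eqref{EQ:ubpfuni} with the standard univariate bound for $g(x)=x$ w.r.t.\ $\nu=f_\#\mu$ on $[\funcmin,\funcmax]$, lower-bound $\nu([\funcmin,\funcmin+t])$ by a power of $t$ using the Lipschitz property of $f$ together with the fatness of $\mainset$ at $\x^*$, and then insert a squared Chebyshev needle, choosing $\delta\asymp\log^2 r/r^2$ so that the exponential decay $e^{-2c_0r\sqrt{\delta}}$ absorbs the polynomial degeneracy $\delta^{-\gamma}$. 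The estimates all check out (with your normalization at $\funcmin+\delta/2$ and the monotonicity of $|T_r|$ outside $[-1,1]$ one in fact gets $|p|\ge 1$ on all of $[\funcmin,\funcmin+\delta/2]$); the only caveat is that the ``minor geometric assumption'' is never stated in this paper, and your Step~2 correctly supplies the interior-ball/fatness condition of the reference, which is exactly what the mass estimate requires.
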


\subsection*{The non-compact setting}
The case where $\mainset$ is non-compact has received less attention. The standard upper bounds for optimization over $\mainset = \R^n$ are known to converge for a family of measures with exponential density functions, including the Gaussian distribution.
\begin{definition} \label{DEF:mugamma}
    For $\LSLpar > 0$, let $\gaus_\LSLpar$ be the probability measure on $\R^n$ with density
    \begin{equation} \label{EQ:gammameasure}
        \frac{\dif}{\dif \lambda} \gaus_\alpha(\x) = w_\alpha(\x) := C_\LSLpar \cdot \exp(-\sum_{i=1}^n|x_i|^\LSLpar),
    \end{equation}
    with respect to the Lebesgue measure $\lambda$ on $\R^n$, where $C_\LSLpar > 0$ is a normalizing constant. Then, the moments of $\gaus_\LSLpar$ exist and are finite. As a consequence, the upper bounds of \eqref{EQ:ub}, \eqref{EQ:ubpf} on the minimum of a polynomial $f$ on $\R^n$ w.r.t.~$\gaus_\LSLpar$ are well-defined. We note that for $\alpha = 2$, the distribution $\gaus_2$ is a Gaussian.
\end{definition}

\begin{theorem}[Special case of {\cite[Theorem 3.4]{Lasserre:upbound}}] \label{THM:gaussianconvergence}
    Let $\mainset = \R^n$, and let $\alpha \geq 1$. Then, for any $f \in \R[\x]$, we have $\lim_{r \to \infty} \upbound{f}{\gaus_\alpha}{r} = \funcmin$.
\end{theorem}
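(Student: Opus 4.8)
The plan is to prove $\limsup_{r\to\infty}\upbound{f}{\gaus_\alpha}{r}\le\funcmin$ by exhibiting, for each $\epsilon>0$, an explicit feasible sum-of-squares density for~\eqref{EQ:ub} of some finite degree $2r=2r(\epsilon)$ whose normalized integral against $\gaus_\alpha$ is at most $\funcmin+\epsilon$. Since $r\mapsto\upbound{f}{\gaus_\alpha}{r}$ is nonincreasing (the cones $\Sigma[\x]_{2r}$ are nested), this will give $\limsup_r\upbound{f}{\gaus_\alpha}{r}\le\funcmin$, and combined with the a priori bound $\upbound{f}{\gaus_\alpha}{r}\ge\funcmin$ (which holds because $f-\funcmin\ge0$ on all of $\R^n$ and the density is nonnegative) we conclude $\lim_r\upbound{f}{\gaus_\alpha}{r}=\funcmin$.

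To build the density, fix a global minimizer $\x^*$ of $f$ on $\R^n$. By continuity of $f$ there is $\delta>0$ with $f\le\funcmin+\epsilon/4$ on the ball $B:=\boxx{\x^*,\delta}$, and $\gaus_\alpha(B)>0$ since $w_\alpha$ is everywhere positive. Put $\phi:=\gaus_\alpha(B)^{-1/2}\,\mathbf{1}_B$, so that $\phi$ is bounded with compact support, $\int_{\R^n}\phi^2\dif\gaus_\alpha=1$, and $\int_{\R^n}f\phi^2\dif\gaus_\alpha\le\funcmin+\epsilon/4$. The idea is to approximate $\phi$ by a polynomial $p$ and take $\sigma:=p^2$ with $r:=\deg p$. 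The subtlety is that $L^2(\gaus_\alpha)$-closeness of $p$ to $\phi$ is not enough, since one must also control $\int f^2(\phi-p)^2\dif\gaus_\alpha$ with $f^2$ unbounded; so instead I would approximate $\phi$ in the \emph{weighted} space $L^2(\nu)$, where $\dif\nu:=(1+f^2)\dif\gaus_\alpha$. Taking a polynomial $p$ with $\int_{\R^n}(\phi-p)^2\dif\nu<\eta$ yields simultaneously $\int(\phi-p)^2\dif\gaus_\alpha<\eta$ and $\int f^2(\phi-p)^2\dif\gaus_\alpha<\eta$. Then $\int\sigma\dif\gaus_\alpha=\|p\|_{L^2(\gaus_\alpha)}^2\in\big[(1-\sqrt\eta)^2,(1+\sqrt\eta)^2\big]$, and, writing $f(p^2-\phi^2)=f(p-\phi)(p+\phi)$ and applying Cauchy--Schwarz, $\big|\int f\sigma\dif\gaus_\alpha-\int f\phi^2\dif\gaus_\alpha\big|\le\sqrt\eta\,(\|p\|_{L^2(\gaus_\alpha)}+1)\le\sqrt\eta\,(2+\sqrt\eta)$. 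Hence the normalized integral of $\sigma$ is at most $\big(\funcmin+\epsilon/4+\sqrt\eta(2+\sqrt\eta)\big)/(1\pm\sqrt\eta)^2$, where the sign is chosen to match that of the numerator; as $\eta\to0$ this tends to $\funcmin+\epsilon/4$, so for $\eta$ small enough it is $\le\funcmin+\epsilon$, as needed.

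The crux, and the only place the hypothesis $\alpha\ge1$ is used, is the density of the polynomials in $L^2(\nu)=L^2\big((1+f^2)w_\alpha\dif\lambda\big)$. For $\alpha>1$ one has $\int_{\R^n}e^{a\sum_i|x_i|}\dif\nu<\infty$ for every $a>0$ (since $at-t^\alpha\to-\infty$), and for $\alpha=1$ the same holds for every $a<1$; so in all cases $\nu$ has a finite exponential moment. It is classical in moment-problem theory that a finite Borel measure on $\R^n$ admitting a finite exponential moment has polynomials dense in $L^p$ for all $p<\infty$; I would invoke this (one also sees where $\alpha\ge1$ is essential from the growth $m_{2k}(\nu)\asymp(2k/\alpha)^{2k/\alpha}$ of the moments of $\nu$, through Carleman-type conditions, so that $\sum_k m_{2k}(\nu)^{-1/(2k)}$ diverges precisely when $\alpha\ge1$). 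I expect this density statement to be the main technical obstacle; the remaining steps are the elementary estimates sketched above. For $\alpha<1$ the argument breaks down exactly here, consistent with $\gaus_\alpha$ then failing to be determinate.
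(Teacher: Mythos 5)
Your proposal is correct, but it takes a genuinely different route from the paper's. The paper treats \Cref{THM:gaussianconvergence} as a true corollary of Lasserre's general convergence result (restated as \Cref{THM:generalconvergence}): one only verifies the integrability condition~\eqref{EQ:mucondition}, i.e.\ $\int_{\R^n} w_\alpha(\x)\,e^{\sum_i|x_i|}\dif\x<\infty$, which holds for $\alpha>1$ since $|x|^\alpha-|x|\ge\tfrac12|x|^\alpha$ for $|x|$ large; the boundary case $\alpha=1$ fails this condition and is handled by a dilation trick (apply the theorem to the density $\propto e^{-2\sum_i|x_i|}$, then transfer back via the substitution $\x\mapsto\x/2$). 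You instead reprove convergence from first principles: the indicator of a small ball around the minimizer, $L^2$-approximation in the weighted space $L^2\big((1+f^2)\dif\gaus_\alpha\big)$, and density of polynomials there via finite exponential moments. Your elementary estimates check out (the Cauchy--Schwarz bookkeeping and the normalization are fine), and your treatment of $\alpha=1$ is arguably cleaner, since $\int e^{a|x|}e^{-|x|}\dif x<\infty$ for any $a<1$ already supplies the exponential moment without any rescaling. The cost is that you must supply or cite the classical fact that a finite exponential moment implies density of polynomials in $L^p(\nu)$ for all $p<\infty$; this is true and standard (and is essentially the engine behind Lasserre's Theorem~3.4 itself), but it is exactly the nontrivial input that the paper outsources to the black box. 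Your Carleman-condition aside should be read as heuristic only (failure of Carleman does not by itself preclude density, and the multivariate situation is subtler), but your argument does not rely on it.
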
\noindent
We show how to obtain~\Cref{THM:gaussianconvergence} from the general result of~\cite{Lasserre:upbound} in~\Cref{SEC:alphaconverge}.
\subsection{Our contributions}
We obtain two negative results on the convergence of the upper bounds in the non-compact setting. First, we show that convergence of the bounds $\upbound{f}{\gaus_\alpha}{r}$ to $\funcmin$ is \emph{not} guaranteed when $\alpha < 1$. This complements \Cref{THM:gaussianconvergence}, settling the question of convergence for distributions of the form~\eqref{EQ:gammameasure}.
\begin{theorem} \label{THM:main1}
    Let $\mainset = \R^n$ and let $\gaus_\LSLpar$ be as in \Cref{DEF:mugamma}.
    If $\LSLpar \in (0,1)$, then there exists a polynomial $f \in \R[\x]$ that attains its minimum $\funcmin$ on $\R^n$, but for which 
    \[
        \lim_{r \to \infty} \upbound{f}{\gaus_\alpha}{r} > \funcmin.
    \]
    In fact, one may take $n=1$ and $f(x) = x^2$.
\end{theorem}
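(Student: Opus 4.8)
The plan is to analyze directly the univariate problem $f(x) = x^2$ with $\mu = \gaus_\alpha$ on $\R$, and to show that the optimal value of~\eqref{EQ:ub} stays bounded away from $\funcmin = 0$ as $r \to \infty$. Since $f \geq 0$ and $\sigma \geq 0$, every feasible $\sigma \in \Sigma[x]_{2r}$ gives $\int x^2 \sigma(x)\dif\gaus_\alpha \geq 0$, so it suffices to produce a constant $c > 0$ with $\int x^2 \sigma(x)\dif\gaus_\alpha(x) \geq c$ for \emph{every} $\sigma \in \Sigma[x]_{2r}$ with $\int \sigma \dif\gaus_\alpha = 1$, uniformly in $r$. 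The key point is that a sum of squares of degree at most $r$ cannot concentrate its $\gaus_\alpha$-mass near $0$ too sharply when $\alpha < 1$, because the weight $w_\alpha(x) = C_\alpha \exp(-|x|^\alpha)$ decays \emph{sub-exponentially}: its tails are ``fat'' enough that a low-degree polynomial density is forced to put a non-negligible fraction of its mass at points $|x| \geq \delta$ for some fixed $\delta > 0$.

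Concretely, I would fix a small $\delta > 0$ and split $\int x^2\sigma\dif\gaus_\alpha \geq \delta^2 \int_{|x|\geq\delta}\sigma\dif\gaus_\alpha = \delta^2\big(1 - \int_{|x|<\delta}\sigma\dif\gaus_\alpha\big)$, so the goal reduces to showing $\int_{|x|<\delta}\sigma(x)\dif\gaus_\alpha(x) \leq 1 - \eta$ for some fixed $\eta > 0$ independent of $r$. Equivalently, writing $\sigma = \sum_j p_j^2$ with $\deg p_j \le r$, one wants to bound $\sum_j \int_{|x|<\delta} p_j^2 w_\alpha$ in terms of $\sum_j \int_\R p_j^2 w_\alpha = 1$. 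This is a statement about the measure $\gaus_\alpha$ restricted to $(-\delta,\delta)$ versus all of $\R$ against the cone of squares of bounded degree; the natural tool is a pointwise polynomial inequality. I would aim to show that there is a fixed compact interval $[-M, M]$ and a constant $\kappa > 0$ such that for every polynomial $p$ of \emph{any} degree, $\sup_{|x|\le\delta} |p(x)| \le (\text{something})$ forces $\int_{M \le |x| \le 2M} p^2 w_\alpha$ to be comparable to $\int_{|x|\le\delta} p^2 w_\alpha$ — i.e. the mass cannot all hide in $(-\delta,\delta)$. The cleanest route is probably via the theory of weighted polynomial approximation / Markov–Bernstein inequalities for the weight $e^{-|x|^\alpha}$ with $\alpha < 1$ (Freud-type weights fail the Szegő / regularity condition here), which is exactly the regime where polynomials are \emph{not} dense in the corresponding weighted $L^2$ space; this non-density is morally the reason the bounds cannot converge.

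An alternative, possibly more self-contained approach: use the characterization~\eqref{EQ:ubpfuni} is not available (that is the push-forward version), so instead exhibit explicitly an auxiliary bounded function $h$ (e.g. $h(x) = \min(x^2, M^2)$ smoothed, or a rational function) which \emph{can} be dominated below by a low-degree sum of squares expression but whose $\gaus_\alpha$-average is bounded below — more precisely, find a nonnegative $g$ with $g \le x^2$ on $\R$, $\int g\dif\gaus_\alpha =: c > 0$, and such that $g$ lies in the closure of the cone generated by $\{f\sigma : \sigma\in\Sigma_{2r}\}$ in the appropriate dual sense, yielding $\upbound{x^2}{\gaus_\alpha}{r} \ge c$ by weak duality. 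I expect the main obstacle to be making the ``mass cannot concentrate'' step quantitative and uniform in $r$: one must genuinely use that $\alpha < 1$ (for $\alpha \ge 1$ the bounds \emph{do} converge by~\Cref{THM:gaussianconvergence}), so the argument has to exploit the precise failure of the Carleman/Szegő condition for $e^{-|x|^\alpha}$, $\alpha<1$. Once such a uniform lower bound on the $\gaus_\alpha$-mass outside a neighborhood of $0$ is in hand, the conclusion $\lim_{r\to\infty}\upbound{x^2}{\gaus_\alpha}{r} \ge \delta^2\eta > 0 = \funcmin$ follows immediately, and the monotone limit exists since the bounds are non-increasing in $r$.
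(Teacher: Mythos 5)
Your diagnosis of the phenomenon is right --- for $\alpha<1$ the weight $e^{-|x|^\alpha}$ sits exactly in the regime where weighted polynomial approximation fails, and the whole difficulty is a \emph{degree-uniform} statement preventing an SOS density from concentrating near the minimizer. But the proposal stops precisely where the proof has to begin: both of your suggested routes (``I would aim to show that there is a fixed compact interval \dots'', ``the cleanest route is probably via \dots Markov--Bernstein inequalities'') defer the one quantitative lemma that carries all the content. Your intermediate claim, that every feasible $\sigma\in\Sigma[x]_{2r}$ satisfies $\int_{|x|<\delta}\sigma\,\dif\gaus_\alpha\le 1-\eta$ uniformly in $r$, is true but is nowhere near ``immediate from non-density''; it needs its own uniform-in-degree inequality, and you neither state nor prove one. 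The second, duality-based alternative is even less developed: you do not exhibit the auxiliary function $g$ nor verify it lies in the relevant dual cone. As written, the argument is a plan plus a correctly identified obstacle, not a proof.

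For comparison, the paper makes this quantitative via a single imported tool, \Cref{LEM:derivbound} (Bun--Steinke, built from Nevai--Totik Markov- and Nikolskii-type inequalities): for $\alpha\in(0,1)$ there is a constant $M_\alpha$ with $\sup_x|p'(x)|w_\alpha(x)\le M_\alpha\int|p|w_\alpha$ for \emph{all} polynomials $p$, regardless of degree. Note also that the paper's route is structured differently from yours: it does \emph{not} show that mass cannot concentrate near $0$. Instead it shows that a feasible $p_\eta$ with objective $\le\eta^2$ \emph{must} concentrate (so $\max_{|x|\le 2\eta}p_\eta\gtrsim 1/\eta$) while being $O(\eta^2)$ somewhere on $[1,2]$; the Mean Value Theorem then forces $\max_{|x|\le 2}|p_\eta'|\gtrsim 1/\eta$, contradicting the uniform derivative bound for small $\eta$. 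If you want to salvage your version, you would still need essentially this lemma (your concentration claim follows from it by the same MVT argument), so the missing ingredient is identical; you should either prove such a degree-independent Markov-type inequality for $e^{-|x|^\alpha}$, $\alpha<1$, or cite one explicitly and then carry out the contradiction in full.
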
 \noindent
Second, we show that the push-forward bounds~\eqref{EQ:ubpf} may fail to converge to $\funcmin$ for any $\alpha > 0$, which includes the Gaussian distribution ($\alpha$ = 2).
\begin{theorem}[No free lunch] \label{THM:main2}
Let $\mainset = \R^n$ and let $\gaus_\alpha$, $\alpha > 0$ be as in \Cref{DEF:mugamma}. Then, there exists a polynomial $f$ that attains its minimum $\funcmin$ on $\R^n$, but for which 
    \[
        \lim_{r \to \infty} \upboundpf{f}{\gaus_\alpha}{r} > \funcmin.
    \]
    In fact, one may take $n=1$ and $f(x) = x^2 + x^{2\lceil \alpha \rceil + 2}$.
\end{theorem}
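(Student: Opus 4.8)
The plan is to reduce everything to a statement about the moment problem of the push-forward measure $\nu := f_{\#}\gaus_\alpha$ on $[0,\infty)$, taking $n=1$ and $f(x) = x^2 + x^{2\lceil\alpha\rceil+2}$. Since $f\ge 0$ vanishes only at $0$, we have $\funcmin = 0$, and $\nu$ is a probability measure with finite moments $m_j := \int_0^\infty u^j\dif\nu(u) = \int_\R f(x)^j\dif\gaus_\alpha(x)$. First I would rewrite the bound in variational form: since the extreme rays of $\Sigma[x]_{2r}$ are the single squares $q^2$ with $\deg q\le r$, \eqref{EQ:ubpfuni} gives
\[
  \upboundpf{f}{\gaus_\alpha}{r} \;=\; \min_{q\in\R[u],\ q\ne 0,\ \deg q\le r}\ \frac{\int_0^\infty u\,q(u)^2\dif\nu(u)}{\int_0^\infty q(u)^2\dif\nu(u)},
\]
which is the usual variational description of the smallest node of the $(r+1)$-point Gaussian quadrature rule for $\nu$ (equivalently, of the smallest root of the degree-$(r+1)$ orthogonal polynomial of $\nu$). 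This quantity is non-increasing in $r$ and bounded below by $\funcmin = 0$, and it decreases to $L := \inf_{q\ne 0}\ \int_0^\infty u\,q^2\dif\nu / \int_0^\infty q^2\dif\nu$. It therefore suffices to prove $L > 0$.

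The key observation is a sufficient condition for $L>0$: if there is a $\gamma_0 > 0$ and a positive Borel measure $\mu_0$ supported on $[\gamma_0,\infty)$ with $\int u^j\dif\mu_0 = m_j$ for all $j\ge 0$, then $L\ge\gamma_0$. Indeed, for any polynomial $q$ the function $u\mapsto u\,q(u)^2$ is a polynomial, so $\int u\,q^2\dif\nu = \int u\,q^2\dif\mu_0 \ge \gamma_0\int q^2\dif\mu_0 = \gamma_0\int q^2\dif\nu$. (In moment language this says that $(m_{j+1}-\gamma_0 m_j)_{j\ge 0}$ is the moment sequence of the positive measure $(u-\gamma_0)\dif\mu_0$, so that the Hankel forms $(m_{i+j+1}-\gamma_0 m_{i+j})_{i,j}$ are all positive semidefinite.) Thus the proof comes down to producing such a $\mu_0$ — a representing measure for the moments of $\nu$ that stays a fixed distance away from $0$.

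To produce $\mu_0$, I would first pin down the tail of $\nu$. Writing $h := f|_{[0,\infty)}$ for the increasing bijection of $[0,\infty)$ and using $h^{-1}(u)\sim u^{1/(2\lceil\alpha\rceil+2)}$ as $u\to\infty$, one finds $\nu([u,\infty)) = 2C_\alpha\int_{h^{-1}(u)}^\infty e^{-x^\alpha}\dif x = \Theta\big(u^{c_1}\exp(-c_2 u^{\beta})\big)$ for constants $c_2>0$, $c_1\in\R$, where
\[
  \beta \;:=\; \frac{\alpha}{2\lceil\alpha\rceil+2}\ \in\ \Big(0,\tfrac12\Big);
\]
equivalently $m_j^{1/j} = \Theta(j^{1/\beta})$, which grows faster than $j^2$ since $1/\beta>2$. (The summand $x^{2\lceil\alpha\rceil+2}$ is chosen so that $\deg f$ is large enough to force $\beta<\tfrac12$ strictly, even for integer $\alpha$, where $x^{2\lceil\alpha\rceil}$ alone would give the borderline value $\beta=\tfrac12$.) In particular $\nu$ is Stieltjes-indeterminate. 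The remaining — and main — task is to upgrade indeterminacy to the existence of a representing measure supported on $[\gamma_0,\infty)$ with $\gamma_0>0$. I would attempt this by an explicit construction: seek $\mu_0 = \sum_{\ell\ge 0} w_\ell\,\delta_{u_\ell}$ on a sequence $\gamma_0 = u_0 < u_1 < \cdots\to\infty$ taken comparably to the tail quantiles of $\nu$ (roughly $u_\ell\asymp(\ell+\gamma_0^\beta)^{1/\beta}$), with positive weights $w_\ell$ determined by the moment-matching equations $\sum_\ell w_\ell u_\ell^j = m_j$; the point of $\beta<\tfrac12$ is that $m_j$ then dominates the $\gamma_0^j$-sized contribution of the mass of $\nu$ on $[0,\gamma_0)$ strongly enough that the (otherwise under-determined) system can be solved with all $w_\ell>0$ while keeping the smallest node $u_0=\gamma_0$ strictly positive.

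I expect the main obstacle to be exactly this last step. Mere indeterminacy of $\nu$ does \emph{not} suffice — for sufficiently heavy (e.g.\ log-normal-type) tails every representing measure has $0$ in its closed support — so one genuinely needs the quantitative bound $\beta<\tfrac12$ together with a careful convergence-and-positivity analysis of the weights $w_\ell$ (alternatively, one may invoke the theory of canonical/N-extremal solutions and the index of determinacy of the indeterminate Hamburger moment problem attached to $\nu$ to locate a solution whose smallest atom is positive). The other ingredients — the variational identity for $\upboundpf{f}{\gaus_\alpha}{r}$, its monotone convergence to $L$, the sufficient condition $L\ge\gamma_0$, and the tail asymptotics of $\nu$ — are routine.
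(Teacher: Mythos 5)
Your reformulation is correct as far as it goes: the variational identity $\upboundpf{f}{\gaus_\alpha}{r}=\min_{q\neq 0,\ \deg q\le r}\int u\,q(u)^2\dif\nu(u)/\int q(u)^2\dif\nu(u)$ holds, the limit $L$ is indeed what must be shown positive, the sufficient condition ``a representing measure $\mu_0$ for the moments of $\nu$ supported on $[\gamma_0,\infty)$ forces $L\ge\gamma_0$'' is valid (both integrands are polynomials, so they see only the moments), and the tail exponent $\beta=\alpha/(2\lceil\alpha\rceil+2)<\tfrac12$ is computed correctly. But there is a genuine gap exactly where you say you expect one: the existence of such a $\mu_0$ is never established. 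The atomic construction you sketch (nodes at tail quantiles, weights ``determined by the moment-matching equations \dots{} solved with all $w_\ell>0$'') is an infinite nonlinear system for which you give no solvability or positivity argument; and the alternative route via N-extremal solutions would require you first to prove Stieltjes-indeterminacy of $\nu$ (e.g.\ via Krein's logarithmic integral condition --- this is where $\beta<\tfrac12$ would actually enter) and then to invoke a nontrivial theorem that Stieltjes-indeterminacy forces the Friedrichs parameter $\alpha_F=L$ to be strictly positive. Neither step is carried out or cited precisely, and since $L>0$ is in fact \emph{equivalent} to the existence of a representing measure bounded away from $0$, your ``key observation'' is essentially a restatement of the theorem: all of the difficulty has been deferred to the unproved step.

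For comparison, the paper avoids the moment problem entirely. It first establishes (via the Bun--Steinke Markov--Nikolskii-type inequality, \Cref{LEM:derivbound}, which yields \Cref{THM:main1}) that $\upboundpf{x^2}{\gaus_\beta}{r}\ge\upbound{x^2}{\gaus_\beta}{2r}\ge\varepsilon$ for LSL weights $\beta\in(0,1)$, and then shows by elementary calculus (\Cref{LEM:densitysqueeze}) that the density of $g_\#\gaus_\alpha$ for $g(x)=x^2+x^{2d}$, $d>\alpha$, is controlled by constant multiples of the density of $(x^2)_\#\gaus_\beta$ (an upper bound on $[0,1]$ and a lower bound on $[0,\infty)$ suffice), so that a near-optimal feasible $s$ for $\upboundpf{g}{\gaus_\alpha}{r}$ would rescale to a near-optimal feasible solution for $\upboundpf{x^2}{\gaus_\beta}{r}$, a contradiction. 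If you wish to pursue your route, the missing ingredient is a proved, quantitative version of ``tail decay $\exp(-cu^\beta)$ with $\beta<\tfrac12$ implies $\alpha_F>0$''; the paper's density comparison is, in effect, the elementary substitute for exactly that statement.
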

\Cref{THM:gaussianconvergence} and \Cref{THM:main2} tell us that while the bounds $\upbound{f}{\mu}{r}$ \emph{always} converge to $\funcmin$ under a Gaussian distribution, the push-forward bounds $\upboundpf{f}{\mu}{r}$ \emph{do not}. That is, in the non-compact setting, the computational advantage of the push-forward bounds comes at a steep cost.
This stands in stark contrast to the compact setting, where the asymptotic behaviour of the standard and push-forward upper bounds is essentially the same. 

\subsection{High-level approach and related work}
From now, we restrict to the case~$n=1$ for simplicity. In \Cref{SEC:multivariate}, we will see that our proofs readily extend to the multivariate setting.
Intuitively, the upper bounds~\eqref{EQ:ub} achieve small error when the chosen sum of squares $\sigma$ is a good approximation of the Dirac measure centered at a minimizer $x^*$ of $f$ on~$\mainset$. Usually, one can approximate this Dirac measure arbitrarily well by a sequence of continuous functions; for instance, by the `tent functions'
\begin{equation} \label{EQ:tent}
    x \mapsto C_\beta \cdot \max \left\{ 0, \, 1 - \frac{1}{\beta} |x^* - x| \right\} \quad (\beta \to 0).
\end{equation}
The question of convergence for the upper bounds thus relates to a question of polynomial approximation of continuous functions on $\mainset$ w.r.t. the $L_1$-norm of~$\mu$. 

\medskip\noindent\textbf{LSL distributions.}
The distributions $\gaus_\LSLpar$, $\LSLpar \in (0, 1)$ that appear in \Cref{THM:main1} are called \emph{log-superlinear (LSL)}.
In some sense, polynomial approximation of continuous functions under LSL distributions is known to be impossible, see for instance the survey of Lubinsky~\cite{Lubinsky:survey}. 
This fact was exploited already by Bun and Steinke~\cite{LSLlowerbounds}, who use it to show an impossibility result in agnostic learning. 
In fact, in our proof of \Cref{THM:main1}, we rely on their \Cref{LEM:derivbound} below. 
In this lemma, they combine a  Markov-type and a Nikolskii-type inequality due to Nevai and Totik~\cite{NevaiTotik:markov, NevaiTotik:nikolskii} to show that any polynomial with bounded $L_1$-norm under an LSL distribution must have bounded derivative near the origin. 
Note that this precludes, e.g., approximation of the tent functions~\eqref{EQ:tent}, whose derivative near $x^*$ is very large as $\beta \to 0$. In our proof of \Cref{THM:main1}, we show that any feasible solution $\sigma \in \Sigma[x]$ to~\eqref{EQ:ub} achieving small objective value must have large derivative near the origin, leading to a contradiction.

\medskip\noindent\textbf{Push-forwards of Gaussians.}
We turn now to \Cref{THM:main2}. As push-forward bounds are weaker than standard bounds, \Cref{THM:main1} tells us that for $f(x) = x^2$, and any $\beta \in (0, 1)$,
\[
\lim_{r \to \infty} \upboundpf{f}{\gaus_\beta}{r} \geq \lim_{r \to \infty} \upbound{f}{\gaus_\beta}{2r}> 0.
\]
We wish to use this result to show that the push-forward bounds are not guaranteed to converge for any measure $\gaus_\alpha$, $\alpha > 0$. For concreteness, consider $\alpha = 2$, and set $\beta = 0.99$.
The intuition is that, for $g \in \R[x]$ of large enough degree, the push-forward measure~$g_\# \gaus_2$ will have heavier tails than~$f_\#\gaus_{0.99}$. For instance, if $g(x) = x^6$, we may use the substitution formula for integration to show that the density function of~$g_\# \gaus_2$ satisfies:
\begin{equation} \label{EQ:densityformula}
    \frac{\dif}{\dif \lambda} g_\#\gaus_2(x) \approx \exp(-x^{1/3}) \geq \exp(-x^{0.99/2}) \approx \frac{\dif}{\dif \lambda} f_\#\gaus_{0.99}(x) \quad \forall \, x \gg 1.
\end{equation}
The main technical obstacle that remains is to analyze the comparative behaviour of $g_\# \gaus_2$ and $f_\#\gaus_{0.99}$ \emph{near the origin} (cf. \Cref{LEM:densitysqueeze} below). To better control this behaviour, we consider instead $g(x) = x^2 + x^6$ in our proof of \Cref{THM:main2}.

\subsection{Notations}
Throughout, $\R[x]$ is the space of (univariate) polynomials. We write $\Sigma[x] \subseteq \R[x]$ for the cone of sums of squares, i.e., polynomials of the form $s(x) = p_1(x)^2 + p_2(x)^2 + \ldots + p_\ell(x)^2$. For $r \in \N$, we write $\Sigma[x]_{2r} \subseteq \Sigma[x]$ for the subcone of sums of squares where each of the $p_i$ in the above are of degree at most~$r$.

We denote by $\lambda$ the Lebesgue measure on $\R$.
For a measure $\mu$ which is absolutely continuous w.r.t. $\lambda$, we write
\[
    \frac{\dif}{\dif \lambda} \mu : \R \to \R_{\geq 0}
\]
for its Radon--Nikodym derivative, which in the case of a probability measure is just its density function. Note that $\frac{\dif \mu}{\dif \lambda}$ is uniquely defined only up to a Lebesgue-measure zero set. For this reason, any statements we make about $\frac{\dif \mu}{\dif \lambda}$ should be interpreted to hold $\lambda$-almost everywhere. For a measurable function $f : \R \to \R$, we write $f_\#\mu$ for the push-forward measure of $\mu$ by $f$, defined via $f_\#\mu(A) = \mu(f^{-1}(A))$, $A \subseteq \R$ measurable. If $f$ is smooth with nonzero derivative $\lambda$-a.e. (e.g., if $f$ is a non-constant polynomial), $f_\#\mu$ has a density w.r.t. $\lambda$, which we denote $\frac{\dif}{\dif \lambda} f_\#\mu$.

\section{The upper bounds fail to converge under LSL distributions.}
\label{SEC:oneproof}
In this section, we prove \Cref{THM:main1} for $n=1$. (We show how to extend the result to the general case in \Cref{SEC:multivariate}.) The key technical tool is the following lemma, which provides a \emph{uniform} upper bound on the derivative of a polynomial (of any degree) with bounded $L_1$-norm under $\gaus_{\LSLpar}$, when $\LSLpar \in (0,1)$.

\begin{lemma}[{\cite[Lemma 20]{LSLlowerbounds}}] \label{LEM:derivbound}
    For $\LSLpar \in (0,1)$, there is a constant $M_\LSLpar > 0$ such that 
    \begin{equation}
        \sup_{x \in \R} \left\{ |p'(x)| w_\LSLpar(x) \right\} \leq M_\LSLpar \int_{\R} |p(x)|w_\LSLpar(x) \dif x \quad \forall p \in \R[x].
    \end{equation}
    Here, $w_\alpha(x) = C_\alpha \cdot \exp(-|x|^\alpha)$ is the density function of~\eqref{EQ:gammameasure}.
\end{lemma}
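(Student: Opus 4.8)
To prove \Cref{LEM:derivbound} I would follow the strategy indicated above, composing a weighted Markov-type inequality with a weighted Nikolskii-type inequality. Write $Q(x) = |x|^\alpha$, so that $w_\alpha = C_\alpha e^{-Q}$ is a Freud-type weight on $\R$. The entire point of the restriction $\alpha \in (0,1)$ is that this weight decays so slowly that the degree-dependent constants appearing in the classical weighted polynomial inequalities become \emph{uniformly} bounded in the degree.

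The plan is as follows. First, recall the Mhaskar--Rakhmanov--Saff number $a_n = a_n(w_\alpha)$, which governs the effective support $[-a_n, a_n]$ of a weighted polynomial $p\,w_\alpha$ with $\deg p \le n$. For $Q(x) = |x|^\alpha$ one has the asymptotics $a_n \asymp n^{1/\alpha}$, hence $n/a_n \asymp n^{1-1/\alpha}$; since $1 - 1/\alpha < 0$ when $\alpha < 1$, the quantity $\kappa_\alpha := \sup_{n \ge 1} n/a_n$ is finite (in fact $n/a_n \to 0$). This is the observation that makes the lemma true for $\alpha < 1$ — and false for $\alpha \ge 1$. Second, invoke the two inequalities of Nevai and Totik, valid for every $p \in \R[x]$ with $\deg p \le n$: the weighted Markov--Bernstein inequality~\cite{NevaiTotik:markov}, $\|p'\,w_\alpha\|_{L_\infty(\R)} \le A_\alpha\,(n/a_n)\,\|p\,w_\alpha\|_{L_\infty(\R)}$, and the weighted Nikolskii inequality~\cite{NevaiTotik:nikolskii} comparing the sup-norm with the $L_1$-norm, $\|p\,w_\alpha\|_{L_\infty(\R)} \le B_\alpha\,(n/a_n)\,\|p\,w_\alpha\|_{L_1(\R)}$ (up to the precise power of $n/a_n$ occurring in the references, which is harmless here since every fixed power of $n/a_n$ is bounded by a power of $\kappa_\alpha$). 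Bounding $n/a_n \le \kappa_\alpha$ in both removes the dependence on the degree. Third, compose the two: for any $p \in \R[x]$,
\[
\sup_{x \in \R}\bigl\{|p'(x)|\,w_\alpha(x)\bigr\} \;=\; \|p'\,w_\alpha\|_{L_\infty(\R)} \;\le\; A_\alpha B_\alpha \kappa_\alpha^{2}\,\|p\,w_\alpha\|_{L_1(\R)} \;=\; M_\alpha \int_\R |p(x)|\,w_\alpha(x)\dif x,
\]
with $M_\alpha := A_\alpha B_\alpha \kappa_\alpha^{2}$ independent of $\deg p$; since every polynomial has finite degree, this is the claimed inequality.

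I expect the main difficulty to be bibliographic/technical rather than conceptual: one must locate the two Nevai--Totik inequalities in precisely the form required — in particular a Nikolskii comparison between $L_\infty$ and $L_1$ (not merely between two finite $L_p$-norms) with the sharp $n/a_n$ dependence — and verify the asymptotic $a_n \asymp n^{1/\alpha}$ for the weight $e^{-|x|^\alpha}$. A minor wrinkle is that $Q(x) = |x|^\alpha$ is not differentiable at the origin for $\alpha \in (0,1)$, so one should use versions of these estimates that permit Freud weights with such a mild singularity (or first replace $Q$ by a smooth function agreeing with it outside a neighbourhood of $0$, adjusting the constants); this is standard in the weighted-approximation literature. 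Finally, one should note at the outset that both sides of the asserted inequality are finite for every polynomial $p$, because $e^{-|x|^\alpha}$ decays faster than any polynomial grows whenever $\alpha > 0$.
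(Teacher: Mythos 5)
The paper does not prove this lemma itself — it imports it verbatim from Bun and Steinke \cite{LSLlowerbounds}, noting only that their proof combines a Markov-type and a Nikolskii-type inequality of Nevai and Totik, with the point being that for $\alpha<1$ the factor $n/a_n \asymp n^{1-1/\alpha}$ is bounded uniformly in the degree. Your proposal reconstructs exactly that argument, so it is correct and takes essentially the same approach as the (cited) proof.
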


The following proposition immediately implies \Cref{THM:main1}.
\begin{proposition} \label{PROP:main1proof}
Let $\LSLpar \in (0, 1)$. Let $\mainset = \R$, equipped with the measure $\gaus_\LSLpar$. Consider the polynomial $f(x) = x^2$, which has minimizer $\funcmin = 0$, attained at $x^* = 0$. Then, there exists an $\epsilon > 0$ such that $\upbound{f}{\gaus_\LSLpar}{r} - \funcmin \geq \epsilon$ for all $r \in \N$.
\end{proposition}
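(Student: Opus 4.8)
The plan is to derive a contradiction from the assumption that $\upbound{f}{\gaus_\LSLpar}{r} \to \funcmin = 0$ using \Cref{LEM:derivbound}. Suppose $\sigma = p^2 + q^2 + \cdots \in \Sigma[x]$ is feasible for \eqref{EQ:ub}, i.e., $\int_\R \sigma(x) w_\LSLpar(x)\dif x = 1$, and achieves objective value $\int_\R x^2 \sigma(x) w_\LSLpar(x)\dif x = \delta$ for some small $\delta > 0$. I want to show $\delta$ cannot be arbitrarily small, so I will extract from these two conditions a lower bound on $\delta$ that is independent of the degree of $\sigma$.

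First I would observe that the two constraints say that, under the probability measure $\gaus_\LSLpar$, the random variable $X$ has $\mathbb{E}[\sigma(X)] = 1$ and $\mathbb{E}[X^2 \sigma(X)] = \delta$; since $\sigma \geq 0$, Markov's inequality gives that $\sigma(X)$ is concentrated near values where $|x|$ is small — precisely, $\int_{|x| \geq t} \sigma(x) w_\LSLpar(x)\dif x \leq \delta / t^2$ for every $t > 0$. Hence for $t$ a fixed constant (say $t = 1$) we have $\int_{|x| \leq 1} \sigma(x) w_\LSLpar(x)\dif x \geq 1 - \delta$, so $\sigma$ has most of its mass (w.r.t.\ $\gaus_\LSLpar$) concentrated on $[-1,1]$. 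On the other hand, $\sigma$ must also integrate to something not too large overall: in fact $\int_\R \sigma(x) w_\LSLpar(x)\dif x = 1$ exactly, and writing $\sigma = \sum_i p_i^2$ we get $\sum_i \int_\R p_i(x)^2 w_\LSLpar(x)\dif x = 1$.

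The key step is to apply \Cref{LEM:derivbound}, but to the polynomials $p_i$ (or to $\sigma$ directly) in a way that forces $\sigma$ to be roughly constant near $0$, which is incompatible with it being concentrated on $[-1,1]$ while integrating to a fixed total. Concretely: if $\sigma$ is concentrated on $[-1,1]$ with total mass close to $1$ but is small (in $L_1(\gaus_\LSLpar)$) outside, yet the minimum of $x^2\sigma(x)$ forces $\sigma$ to be tiny near $x=0$ as well (since $\mathbb{E}[X^2\sigma(X)] = \delta$ means $\sigma$ cannot be large on any region away from the origin, but near the origin $x^2$ is small so this does not directly constrain $\sigma(0)$) — wait, the correct reading is the reverse: concentration near the origin is \emph{forced}, so $\sigma$ is large near $0$ and small away from $0$, i.e.\ $\sigma$ looks like an approximate tent/Dirac at $0$. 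But \Cref{LEM:derivbound} applied to $\sqrt{\sigma}$'s square-root components says each $|p_i'(x)| w_\LSLpar(x) \leq M_\LSLpar \int |p_i| w_\LSLpar$; combining with $\int p_i^2 w_\LSLpar \leq 1$ and Cauchy–Schwarz ($\int |p_i| w_\LSLpar \leq (\int p_i^2 w_\LSLpar)^{1/2} \cdot (\int w_\LSLpar)^{1/2}$, using that $\gaus_\LSLpar$ is a probability measure so $\int w_\LSLpar = 1$) yields $\sup_x |p_i'(x)| w_\LSLpar(x) \leq M_\LSLpar$ uniformly. This bounds the derivative of each $p_i$, hence of $\sigma = \sum p_i^2$, on any fixed compact neighborhood of $0$ (where $w_\LSLpar$ is bounded below by a positive constant) by a constant $M'_\LSLpar$ independent of $r$ and of $\sigma$. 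So $|\sigma(x) - \sigma(0)| \leq M'_\LSLpar |x|$ on, say, $[-1,1]$ — here I should be slightly careful and instead bound $|p_i(x) - p_i(0)|$ and use $\sum p_i(0)^2 \leq \max_x \sigma$, or argue directly that $\sigma$ is $M'_\LSLpar$-Lipschitz-like near $0$; the cleanest route is: $\sigma$ has bounded derivative $|\sigma'(x)| \leq 2\sum_i |p_i(x)||p_i'(x)|$, and on $[-1,1]$ both $\sum p_i^2$ and $\sum (p_i')^2$ are controlled, so $|\sigma'| \leq K_\LSLpar$ on $[-1,1]$ for a constant $K_\LSLpar$ depending only on $\LSLpar$.

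Finally I would combine the two facts. From $\int_{|x|\leq 1} \sigma w_\LSLpar \geq 1 - \delta$ and the fact that $w_\LSLpar \leq C_\LSLpar$ everywhere (and $\lambda([-1,1]) = 2$), we get $\max_{[-1,1]} \sigma \geq (1-\delta)/(2C_\LSLpar) =: c_\LSLpar > 0$ for $\delta$ small. So there is a point $x_0 \in [-1,1]$ with $\sigma(x_0) \geq c_\LSLpar$, and by the $K_\LSLpar$-Lipschitz bound, $\sigma(x) \geq c_\LSLpar/2$ for all $x$ with $|x - x_0| \leq c_\LSLpar/(2K_\LSLpar) =: \eta_\LSLpar$; intersecting with $[-1,1]$ this gives an interval $I$ of length $\geq \min(\eta_\LSLpar, 1)$ on which $\sigma \geq c_\LSLpar/2$. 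This interval $I$ has some point at distance $\geq \eta_\LSLpar / 2$ (roughly) from $0$ — more carefully, either $I$ already lies at bounded distance from $0$ so that $\int_I x^2 \sigma w_\LSLpar$ is bounded below, or if $x_0$ is very close to $0$ I use that $\max_{[-1,1]\setminus[-1/2,1/2]} \sigma$ is also bounded below by a Markov-type argument (since the mass on $[-1,1]$ is close to $1$ but $\int x^2 \sigma w_\LSLpar = \delta$ bounds the mass on $[-1/2,1/2]^c$... no — I need the opposite). Let me instead take the robust route: the two constraints give $\int_{|x|\ge 1/2} \sigma w_\LSLpar \le 4\delta$ and $\int_{|x| \le 1/2} \sigma w_\LSLpar \ge 1 - 4\delta$, so $\max_{[-1/2,1/2]}\sigma \ge (1-4\delta) / (C_\LSLpar)$, giving $x_0 \in [-1/2,1/2]$; then the Lipschitz bound spreads $\sigma \ge c_\LSLpar/2$ over a fixed-length subinterval $I \subseteq [-1,1]$, and since $|I|$ is a fixed constant, $I$ contains a subinterval $J$ with $\inf_J |x| \ge |I|/4 > 0$ and $|J| \ge |I|/4$, on which $x^2 \sigma(x) w_\LSLpar(x) \ge (|I|/4)^2 (c_\LSLpar/2) \inf_{[-1,1]} w_\LSLpar > 0$. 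Integrating over $J$ gives $\delta = \int_\R x^2 \sigma w_\LSLpar \ge \int_J x^2 \sigma w_\LSLpar \ge \epsilon$ for an explicit constant $\epsilon = \epsilon_\LSLpar > 0$ — provided $\delta$ was small enough to make the earlier estimates valid; for $\delta$ not small the bound $\upbound{f}{\gaus_\LSLpar}{r} \ge \epsilon$ is immediate by shrinking $\epsilon$. Taking $\epsilon = \min(\epsilon_\LSLpar, 1/10)$ finishes the proof.

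\medskip\noindent
I expect the main obstacle to be making the "$\sigma$ is concentrated near $0$ yet has bounded derivative there, contradiction" argument fully rigorous in the presence of the sum-of-squares structure: \Cref{LEM:derivbound} applies to individual polynomials, so one must track the decomposition $\sigma = \sum_i p_i^2$ and control $\sum_i \int p_i^2 w_\LSLpar$ and the resulting bound on $\sigma' = 2\sum_i p_i p_i'$ on a fixed compact set, uniformly in $r$ and in the number of squares. The rest (Markov's inequality on the two moment constraints, then a covering/Lipschitz argument to produce an interval bounded away from the origin on which $\sigma$ is bounded below) is elementary, but choosing the constants in the right order so that everything depends only on $\LSLpar$ requires some care.
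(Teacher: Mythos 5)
Your overall strategy is sound and close in spirit to the paper's: both hinge on \Cref{LEM:derivbound} forcing a feasible density $\sigma$ to be ``flat'' on a fixed neighbourhood of the origin, which is incompatible with the mass concentration that a small objective value imposes. The one step that fails as written is your derivation of the uniform bound $|\sigma'| \le K_\LSLpar$ on $[-1,1]$. Your route through the decomposition $\sigma = \sum_i p_i^2$ does give, via \Cref{LEM:derivbound} and Cauchy--Schwarz, that $\sum_i |p_i'(x)|^2 \le K^2$ on $[-1,1]$ with $K$ depending only on $\LSLpar$; but the other factor you need, a pointwise bound on $\sum_i p_i(x)^2 = \sigma(x)$, is precisely the quantity whose size is at issue --- the constraint $\int \sigma w_\LSLpar = 1$ does not bound $\sigma$ pointwise a priori (for $\alpha \ge 1$ such a bound genuinely fails: unit-mass sum-of-squares densities can blow up like the degree at a point). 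So Cauchy--Schwarz only yields $|\sigma'(x)| \le 2K\sqrt{\sigma(x)}$ on $[-1,1]$, not a constant.

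There are two easy repairs. The slick one --- and the one the paper uses --- is to apply \Cref{LEM:derivbound} \emph{directly to the polynomial $\sigma$}: since $\sigma \ge 0$ and $\int \sigma w_\LSLpar = 1$, we have $\int |\sigma| w_\LSLpar = 1$, so the lemma gives $\sup_x |\sigma'(x)| w_\LSLpar(x) \le M_\LSLpar$ and hence $|\sigma'| \le e M_\LSLpar / C_\LSLpar$ on $[-1,1]$; no decomposition into squares is needed at all. Alternatively, your weaker inequality $|\sigma'| \le 2K\sqrt{\sigma}$ says that $\sqrt{\sigma}$ is $K$-Lipschitz on $[-1,1]$, and your spreading argument goes through for $\sqrt{\sigma}$: from $\sigma(x_0) \ge c_\LSLpar$ you still get $\sigma \ge c_\LSLpar/4$ on an interval of fixed length around $x_0$. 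With either repair, your endgame --- Markov's inequality to place a point $x_0 \in [-1/2,1/2]$ with $\sigma(x_0) \ge (1-4\delta)/C_\LSLpar$, spreading the lower bound over a fixed-length interval, extracting a subinterval bounded away from $0$, and integrating $x^2 \sigma w_\LSLpar$ over it --- is correct and produces the required $\epsilon$ depending only on $\LSLpar$. Note that this endgame is arranged differently from the paper's, which instead exhibits a point within $O(\eta)$ of the origin where $\sigma \gtrsim 1/\eta$ and a point in $[1,2]$ where $\sigma \lesssim \eta^2$, and lets the Mean Value Theorem contradict the constant derivative bound as $\eta \to 0$; both arrangements work once the derivative bound is in place.
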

\begin{proof}
Let $\eta > 0$, and suppose that $\upbound{f}{\gaus_\LSLpar}{r} - \funcmin \leq \eta^2$ for some $r \in \N$. Then there exists a $p_\eta \in \Sigma[x]$ with $\int_\R p_\eta(x) w_\LSLpar(x) \dif x = 1$, and
\[
    \int_\R f(x) p_\eta(x) w_\LSLpar(x) \dif x \leq \eta^2.
\]
We will show first that $p_\eta(x)$ should be large (in terms of $\eta$) for at least one~$x$ near the origin, but small for at least one $x \in [1, 2]$. Second, we show this implies that $\sup_{x \in \R} |p'(x)|$ should be large (in terms of $\eta$), which will yield a contradiction with \Cref{LEM:derivbound} when $\eta$ is sufficiently small.

\medskip \noindent \textbf{Step 1.}
As $f(x) = x^2 \geq 4 \eta^2$ for all $x \not \in [-2\eta, 2\eta]$, we must have that 
\[
\int_{|x| \geq 2\eta} p_\eta(x) w_\LSLpar(x) \dif x \leq \frac{1}{4},
\]
which implies that 
\[
\int_{|x| \leq {2\eta}} p_\eta(x) w_\LSLpar(x) \dif x \geq \frac{3}{4}.
\]
On the other hand, we have
\[
\int_{|x| \leq {2\eta}} p_\eta(x) w_\LSLpar(x) \dif x \leq {4\eta} \cdot \max_{|x| \leq {2\eta}} \{ p_\eta(x) w_\LSLpar(x)\},
\]
and it follows that 
\[
\max_{|x| \leq {2\eta}} \{ p_\eta(x) w_\LSLpar(x) \} \geq \frac{3}{16 {\eta}} \geq \frac{1}{8 {\eta}}.
\]
Note that $w_\LSLpar(x) \leq C_\LSLpar$ for all $x \in \R$ by definition. We therefore have that
\begin{equation} \label{EQ:pnearorigin}
\max_{|x| \leq {2\eta}} \{ p_\eta(x) \} \geq \max_{|x| \leq {2\eta}} \{ p_\eta(x) w_\LSLpar(x) \} / C_\LSLpar \geq \frac{1}{8 C_\LSLpar {\eta}}.
\end{equation}
Again by definition, $w_\LSLpar(x) \geq C_\LSLpar / e^2$ for all $x \in [-2, 2]$, and therefore 
\[
    \eta^2 \geq \int_1^2 f(x) p_\eta(x) w_\LSLpar(x) \dif x \geq \frac{C_\LSLpar}{e^2} \cdot \min_{1 \leq x \leq 2} p_\eta(x),
\]
implying that 
\begin{equation} \label{EQ:pawayfromorigin}
\min_{1 \leq x \leq 2} p_\eta(x) \leq \frac{\eta^2 \cdot e^2}{C_\LSLpar}.
\end{equation}

\medskip \noindent \textbf{Step 2.}
Applying the Mean Value Theorem to \eqref{EQ:pnearorigin} and \eqref{EQ:pawayfromorigin}, we find that 
\begin{equation} \label{EQ:p'suplarge}
    \max_{|x| \leq 2} \{ |p_\eta'(x)| \} \geq \frac{1}{2 + 2\eta} \left(\frac{1}{8 C_\LSLpar {\eta}} - \frac{e^2 \cdot \eta^2}{C_\LSLpar} \right).
\end{equation}
But, using again that $w_\LSLpar(x) \geq C_\LSLpar / e^2$ for $x \in [-2, 2]$, \Cref{LEM:derivbound} tells us that
\begin{align} \label{EQ:p'supsmall}
\max_{|x| \leq 2} \{ |p_\eta'(x)| \} &\leq \max_{|x| \leq 2} \{ |p_\eta'(x)| w_\LSLpar(x) \} \cdot \frac{e^2}{C_\LSLpar} \leq \frac{e^2 M_\LSLpar}{C_\LSLpar}.  
\end{align}
Now, we see that the RHS of \eqref{EQ:p'suplarge} tends to $\infty$ as $\eta \to 0$, whereas the RHS of \eqref{EQ:p'supsmall} is a constant, yielding a contradiction for sufficiently small $\eta > 0$.
\end{proof}

\section{The push-forward bounds fail to converge under the Gaussian distribution} \label{SEC:pfproof}
In this section, we prove~\Cref{THM:main2} for $n=1$. (We show how to extend the result to the general case in \Cref{SEC:multivariate}.) The starting point is the observation that~\Cref{PROP:main1proof} gives a non-convergence result for a specific sequence of push-forward bounds; namely, for any $\beta \in (0, 1)$, there is an $\epsilon > 0$ such that
\begin{equation}\label{EQ:pflowerbound}
    \upboundpf{x^2}{\gaus_\beta}{r} \geq  \upbound{x^2}{\gaus_\beta}{2r} \geq \varepsilon \quad \forall \, r \in \N.
\end{equation}
Our strategy is to use~\eqref{EQ:pflowerbound} to obtain, for any $\alpha > 0$, a new lower bound of the form
\[
    \upboundpf{g}{\gaus_\alpha}{r} \geq \varepsilon' \quad \forall \, r \in \N,
\]
where $g$ is a polynomial of degree $2d$ (for $d > \alpha$) with $g_{\min} = 0$ to be chosen later. Writing $f(x) = x^2$, it would suffice to find a $g$ for which there exist constants $c_1, c_2 > 0$ such that, for some $\beta \in (0,1)$, $\lambda$-a.e.,
\begin{equation} \label{EQ:fakewedge}
    c_1 \cdot \frac{\dif}{\dif \lambda} f_\#\gaus_\beta(x) \leq \frac{\dif}{\dif \lambda} g_\#\gaus_\alpha(x) \leq c_2 \cdot \frac{\dif}{\dif \lambda} f_\#\gaus_\beta(x) \quad \forall \, x \in \R_{\geq 0}.
\end{equation}
Indeed, suppose we had such a $g$, and assume that $\upboundpf{g}{\gaus_\alpha}{r} < \epsilon' := \frac{c_1}{c_2} \cdot \epsilon$ for some $r \in \N$. Then, by~\eqref{EQ:ubpfuni}, there exists a sum of squares $s \in \R[x]$ with 
\[
    \int_{0}^\infty x s(x) \, \dif g_\#\gaus_\alpha(x) < \frac{c_1}{c_2} \cdot \varepsilon \quad \text{ and } \quad  \int_0^\infty s(x) \, \dif g_\#\gaus_\alpha(x) = 1.
\]
The inequalities~\eqref{EQ:fakewedge} tell us that, after rescaling, $s$ is a good solution to the program~\eqref{EQ:ubpf} defining $\upboundpf{f}{\gaus_\beta}{r}$ as well (which we know cannot exist).
Indeed, if we set $q(x) := {s(x)}\cdot \big({\int_0^\infty s(x) \, \dif f_\#\gaus_\beta}(x)\big)^{-1}$, we immediately find that
\[
\int_0^\infty q(x) \, \dif f_\#\gaus_\beta(x) = 1.
\]
Furthermore, using \eqref{EQ:fakewedge}, we have that
\[
    \int_{0}^\infty x q(x) \, \dif f_\#\gaus_\beta(x) = \frac{\int_{0}^\infty x s(x) \, \dif f_\#\gaus_\beta(x)}{\int_{0}^\infty s(x) \, \dif f_\#\gaus_\beta(x)} \leq \frac{c_2}{c_1} \cdot \frac{\int_{0}^\infty x s(x) \, \dif g_\#\gaus_\alpha(x)}{\int_{0}^\infty s(x) \, \dif g_\#\gaus_\alpha}(x) < \epsilon.
\]
That is to say, by~\eqref{EQ:ubpfuni}, we have $\upboundpf{f}{\gaus_\beta}{r} < \epsilon$, contradicting~\eqref{EQ:pflowerbound}.

We consider $g(x) = x^2 + x^{2d}$ for some (arbitrary) $d > \alpha$. In light of \eqref{EQ:densityformula}, the degree $2d$ term ensures that $g_\# \gaus_\alpha$ has sufficiently heavy tails, whereas the degree~$2$ term ensures that $\frac{\dif}{\dif \lambda} g_\# \gaus_\alpha(x) \approx \frac{\dif}{\dif \lambda} f_\# \gaus_\beta(x)$ for $x$ near the origin.
Even so, we are not able to prove an inequality of the form~\eqref{EQ:fakewedge} for this $g$. Rather, we have the following weaker statement, which turns out to be sufficient for our purposes. 
\begin{lemma}
\label{LEM:densitysqueeze}
    Let $\alpha > 0$. Let $d > \alpha$, $g(x) = x^2 + x^{2d}$ and $f(x) = x^2$. Then there exist constants $c_1, c_2 > 0$ and $\beta \in (0,1)$, such that the following holds $\lambda$-a.e.
    \begin{align}
        \label{EQ:intervalbig}
        \frac{\dif}{\dif \lambda} f_\# \gaus_\beta(x) &\geq c_1 \cdot \frac{\dif}{\dif \lambda} g_\# \gaus_\alpha(x) &&\forall \,x \in [0, 1], \\
        \label{EQ:outervalbig}
        \frac{\dif}{\dif \lambda} g_\# \gaus_\alpha(x) &\geq c_2 \cdot \frac{\dif}{\dif \lambda} f_\# \gaus_\beta(x)  &&\forall \, x \in [0, \infty).
    \end{align}
\end{lemma}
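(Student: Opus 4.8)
The plan is to first derive an explicit formula for the density of the push-forward of $\gaus_\alpha$ under a monotone polynomial on $[0,\infty)$, using the substitution formula, and then compare the resulting expressions for $f = x^2$ and $g = x^2 + x^{2d}$. Concretely, if $h : [0,\infty) \to [0,\infty)$ is a smooth increasing bijection with inverse $h^{-1}$, then for $x > 0$ the one-sided density satisfies
\[
    \frac{\dif}{\dif \lambda} h_\#\gaus_\alpha(x) = 2C_\alpha \cdot \exp\!\big(-h^{-1}(x)^\alpha\big) \cdot \frac{1}{h'\big(h^{-1}(x)\big)},
\]
the factor $2$ accounting for the two preimages $\pm h^{-1}(x)$ of $x$ under the even function $h$. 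For $f(x) = x^2$ this gives $\frac{\dif}{\dif\lambda} f_\#\gaus_\alpha(x) = C_\alpha \cdot x^{-1/2} \exp(-x^{\alpha/2})$. For $g(x) = x^2 + x^{2d}$, writing $y = g^{-1}(x)$ so that $x = y^2 + y^{2d}$ and $g'(y) = 2y + 2d\, y^{2d-1}$, we get $\frac{\dif}{\dif\lambda} g_\#\gaus_\alpha(x) = 2C_\alpha \exp(-y^\alpha) / (2y + 2d\,y^{2d-1})$. The whole lemma then reduces to an elementary (if slightly tedious) two-regime estimate on the ratio of these two densities, controlled by the asymptotics $y = g^{-1}(x) \sim x^{1/2}$ as $x \to 0^+$ and $y = g^{-1}(x) \sim x^{1/(2d)}$ as $x \to \infty$.

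I would split the analysis at $x = 1$. \emph{Near the origin} ($x \in [0,1]$): here $y = g^{-1}(x) \in [0, g^{-1}(1)]$ is bounded, and $y \asymp \sqrt{x}$ since the $y^2$ term dominates $y^{2d}$ for small $y$ (as $d \ge 1$); hence the Jacobian factor $1/(2y + 2d\,y^{2d-1}) \asymp 1/\sqrt{x}$, which matches the $x^{-1/2}$ factor in $\frac{\dif}{\dif\lambda} f_\#\gaus_\beta(x)$ up to constants, while the exponential factors $\exp(-y^\alpha)$ and $\exp(-x^{\beta/2})$ are each bounded between two positive constants on the compact interval. This gives \eqref{EQ:intervalbig} for \emph{any} choice of $\beta \in (0,1)$, with $c_1$ depending only on $\alpha, d, \beta$. \emph{In the tail} ($x \ge 1$): now $y = g^{-1}(x) \asymp x^{1/(2d)}$, so $\exp(-y^\alpha) \asymp \exp(-c\, x^{\alpha/(2d)})$ for a constant $c > 0$, and the Jacobian $1/(2y+2d\,y^{2d-1}) \asymp x^{-(2d-1)/(2d)}$ is a negative power of $x$. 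Meanwhile $\frac{\dif}{\dif\lambda} f_\#\gaus_\beta(x) = C_\beta x^{-1/2}\exp(-x^{\beta/2})$. Since $d > \alpha$, we have $\alpha/(2d) < 1/2$, so choosing $\beta \in (\alpha/d, 1)$ forces $\beta/2 > \alpha/(2d)$, which means the Gaussian-type tail $\exp(-x^{\beta/2})$ decays strictly faster than $\exp(-c\,x^{\alpha/(2d)})$; this exponential gap dominates the polynomial prefactors for all large $x$, and on the remaining bounded range $x \in [1, R]$ both densities are bounded between positive constants. This yields \eqref{EQ:outervalbig} with a suitable $c_2$.

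The main obstacle I anticipate is not the tail estimate — there the strict inequality $\beta/2 > \alpha/(2d)$ does all the work — but rather being careful about the behavior of $g^{-1}$ and its derivative \emph{uniformly} across the transition region near $x = 1$, i.e., making the two-sided bounds $y \asymp \sqrt{x}$ (small $x$) and $y \asymp x^{1/(2d)}$ (large $x$) precise enough to patch together without a gap, and verifying that the Jacobian $g'(g^{-1}(x))$ never vanishes on $(0,\infty)$ so that the density formula is valid $\lambda$-a.e. I would handle this by noting $g' (y) = 2y(1 + d\,y^{2d-2}) > 0$ for $y > 0$, so $g$ is a smooth increasing bijection of $(0,\infty)$ onto itself, and then proving the clean two-sided bound $\tfrac{1}{2}\min(y^2, y^{2d}) \le \tfrac{1}{2} g(y) \le \max(y^2, y^{2d})$, i.e. $g(y) \asymp y^2$ for $y \le 1$ and $g(y) \asymp y^{2d}$ for $y \ge 1$, together with the analogous bound $g'(y) \asymp y$ for $y \le 1$ and $g'(y) \asymp y^{2d-1}$ for $y \ge 1$; inverting these gives exactly the control on $g^{-1}(x)$ needed, with the single cutoff $y = 1$ (equivalently $x = g(1) = 2$) separating the two regimes, and one only needs to absorb the bounded region $x \in [1,2]$ into constants.
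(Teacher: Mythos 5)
Your proposal is correct and follows essentially the same route as the paper's proof: the substitution formula plus the inverse function rule for $(g^{-1})'$, the split at $x=1$, the two-sided bound $g^{-1}(x)\asymp\sqrt{x}$ with bounded exponential factors near the origin, the tail bound $g^{-1}(x)\lesssim x^{1/(2d)}$, and the choice of $\beta$ with $\beta/2>\alpha/(2d)$ so that the exponential gap absorbs the polynomial prefactors. The uniformity issue you flag near the transition region is handled in the paper exactly as you suggest, by explicit constants on $(0,1]$ and $[1,\infty)$ separately.
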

Before we prove \Cref{LEM:densitysqueeze}, let us first see how \Cref{THM:main2} follows.
\begin{proposition}
Let $\alpha > 0$, $d > \alpha$, $g(x) = x^2 + x^{2d}$ and $\epsilon = \epsilon(\beta)>0$ as in~\eqref{EQ:pflowerbound} (for $\beta$ as in \Cref{LEM:densitysqueeze}). Then, we have
\[
    \upboundpf{g}{\gaus_\alpha}{r} \geq \min \left\{\frac{1}{2}, \, \frac{c_1c_2}{2}  \cdot \varepsilon \right\} \quad \forall \, r \in \N.
\]
\end{proposition}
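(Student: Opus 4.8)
The plan is to argue by contradiction, mimicking the reduction sketched immediately before the statement of \Cref{LEM:densitysqueeze}, but being careful about the fact that the squeeze inequalities~\eqref{EQ:intervalbig}--\eqref{EQ:outervalbig} only hold in one direction on $[0,1]$ and in the other direction on $[0,\infty)$. Suppose that $\upboundpf{g}{\gaus_\alpha}{r} < \min\{\tfrac12, \tfrac{c_1c_2}{2}\varepsilon\}$ for some $r \in \N$. By~\eqref{EQ:ubpfuni} there is a univariate sum of squares $s \in \Sigma[x]_{2r}$ with $\int_0^\infty s(x)\,\dif g_\#\gaus_\alpha(x) = 1$ and $\int_0^\infty x\, s(x)\,\dif g_\#\gaus_\alpha(x) < \min\{\tfrac12, \tfrac{c_1c_2}{2}\varepsilon\}$. (Note that $g$ and $f$ are both nonnegative with minimum $0$, so the push-forward measures are supported on $[0,\infty)$, and the objective integral is over $[0,\infty)$.)

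First I would use~\eqref{EQ:intervalbig} to control the mass of $s$ against $f_\#\gaus_\beta$ on the unit interval: since $\int_0^1 x\, s(x)\,\dif g_\#\gaus_\alpha(x) \le \int_0^\infty x\, s(x)\,\dif g_\#\gaus_\alpha(x) < \tfrac12$, and since on $[0,1]$ we have $x \le 1$ so that $\int_0^\infty x\,s\,\dif g_\#\gaus_\alpha \ge \int_0^\infty \mathbf{1}[x \le 1]\, x \, s \,\dif g_\#\gaus_\alpha$... actually the cleaner route: the constraint $\int_0^\infty s \, \dif g_\#\gaus_\alpha = 1$ together with $\int_0^\infty x s\, \dif g_\#\gaus_\alpha < \tfrac12$ forces $\int_1^\infty s\,\dif g_\#\gaus_\alpha < \tfrac12$ (because on $[1,\infty)$, $x \ge 1$), hence $\int_0^1 s\,\dif g_\#\gaus_\alpha \ge \tfrac12$. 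Then by~\eqref{EQ:intervalbig}, $\int_0^1 s(x)\,\dif f_\#\gaus_\beta(x) \ge c_1 \int_0^1 s(x)\,\dif g_\#\gaus_\alpha(x) \ge \tfrac{c_1}{2}$, so in particular $Z := \int_0^\infty s(x)\,\dif f_\#\gaus_\beta(x) \ge \tfrac{c_1}{2} > 0$. Next, using~\eqref{EQ:outervalbig} on all of $[0,\infty)$, $\int_0^\infty x\, s(x)\,\dif f_\#\gaus_\beta(x) \le \tfrac{1}{c_2}\int_0^\infty x\, s(x)\,\dif g_\#\gaus_\alpha(x) < \tfrac{1}{c_2}\cdot\tfrac{c_1 c_2}{2}\varepsilon = \tfrac{c_1}{2}\varepsilon \le Z \cdot \varepsilon$. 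Therefore the rescaled density $q(x) := s(x)/Z$ is feasible for the program~\eqref{EQ:ubpfuni} defining $\upboundpf{f}{\gaus_\beta}{r}$ and achieves objective value $\int_0^\infty x\, q(x)\,\dif f_\#\gaus_\beta(x) < \varepsilon$. This contradicts~\eqref{EQ:pflowerbound}, which asserts $\upboundpf{x^2}{\gaus_\beta}{r} \ge \varepsilon$ for all $r$.

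The only subtle point — and the step I'd expect to take the most care — is the bookkeeping of which direction of the squeeze is available where, and making sure the degree of $s$ is preserved under the reduction: $s \in \Sigma[x]_{2r}$ is the same sum of squares in both programs, so the level $r$ matches on both sides, which is exactly what makes the contradiction with~\eqref{EQ:pflowerbound} (stated for every $r$) go through. One should also double-check that $Z$ is finite, which follows since $\gaus_\beta$ has finite moments of all orders and $f_\#\gaus_\beta$ inherits this, so $\int_0^\infty s\,\dif f_\#\gaus_\beta < \infty$ for the polynomial $s$; combined with the lower bound $Z \ge c_1/2$ just derived, $q = s/Z$ is a bona fide feasible point. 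Everything else is the short chain of inequalities above.
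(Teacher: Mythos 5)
Your proposal is correct and follows essentially the same route as the paper's proof: concentrate the mass of $s$ on $[0,1]$ using the objective constraint, transfer it to $f_\#\gaus_\beta$ via \eqref{EQ:intervalbig}, bound the $f_\#\gaus_\beta$-objective via \eqref{EQ:outervalbig}, and contradict \eqref{EQ:pflowerbound}. The only difference is cosmetic (you explicitly rescale $q = s/Z$ and reorder the final two inequalities, while the paper works with the ratio directly).
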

\begin{proof}
Suppose that $\upboundpf{g}{\gaus_\alpha}{r} < \eta$ for some $\eta \in (0, \frac{1}{2}]$ and $r \in \N$.
By~\eqref{EQ:ubpfuni}, this means there exists a sum of squares $s$ satisfying $\int_0^\infty s(x)  \dif g_\# \gaus_\alpha(x) \dif x  = 1$, and
\begin{equation} \label{EQ:pfproofupperbound}
    \int_0^\infty xs(x) \dif g_\# \gaus_\alpha(x) \leq \eta \leq \frac{1}{2}.
\end{equation}
Here, we have used the fact that $g_\# \gaus_\alpha$ is supported on $[0, \infty)$ (as $g$ is nonnegative).
As $\int_0^\infty xs(x) \dif g_\# \gaus_\alpha(x) \geq \int_1^\infty s(x) \dif g_\# \gaus_\alpha(x)$, it follows that 
\[
\int_1^\infty s(x) \dif g_\# \gaus_\alpha(x) \leq \frac{1}{2}, \quad \text{whence} \quad \int_0^1 s(x) \dif g_\# \gaus_\alpha(x) \geq \frac{1}{2}.
\]
Using \eqref{EQ:intervalbig}, this implies that
\[
\int_0^\infty s(x) \dif f_\# \gaus_\beta(x) \geq c_1 \cdot \int_0^1 s(x) \dif g_\# \gaus_\alpha(x) \geq \frac{1}{2} c_1.
\]
By~\eqref{EQ:ubpfuni} and~\eqref{EQ:pflowerbound}, this means that 
\[
\frac{\int_0^\infty x s(x) \dif f_\# \gaus_\beta(x)}{\int_0^\infty s(x) \dif f_\# \gaus_\beta(x)} \geq \upboundpf{f}{\gaus_\beta}{r} \geq \epsilon, \, \text{ and so } \, \int_0^\infty x s(x) \dif f_\# \gaus_\beta(x) \geq \frac{1}{2}c_1\epsilon.
\]
But now using~\eqref{EQ:pfproofupperbound} and~\eqref{EQ:outervalbig}, we find that 
\[
\eta \geq \int_0^\infty x s(x) \dif g_\# \gaus_\alpha(x) \geq c_2 \cdot \int_0^\infty x s(x) \dif f_\# \gaus_\beta(x) \geq \frac{1}{2} c_1c_2 \cdot \varepsilon. \qedhere
\]
\end{proof}

\begin{proof}[Proof of \Cref{LEM:densitysqueeze}]
We consider the density $w_\alpha(x) := C_\alpha \exp(-|x|^\alpha)$ of $\gaus_\alpha$. We denote $g_\#w_\alpha$ for the density of $g_\#\gaus_\alpha$, which is supported on $\R_{\geq 0}$. As both $w_\alpha$ and~$g$ are even functions, we may use substitution to compute:
\[
    g_\#w_\alpha(x) = 2 \cdot C_\alpha \exp(-g^{-1}(x)^\alpha) \cdot {(g^{-1})'(x)} \quad \forall \, x > 0,
\]
where $g^{-1} : \R_{>0} \to \R_{>0}$ denotes the inverse of $g$ on the positive real line $\R_{> 0}$. While we do not have an explicit formula for $g^{-1}$, we may reason about its derivative via the inverse function rule:
\[
    (g^{-1})'(x) = \frac{1}{g'(g^{-1}(x))} \quad \forall \, x \in \R_{>0}.
\]

\medskip\noindent\textbf{Upper and lower bound on the interval.}
Now let $x \in (0,1]$. Then, we see that $g^{-1}(x) \in (0, 1]$. More precisely, we have
\[
    \frac{1}{2}\sqrt{x} \leq g^{-1}(x) \leq \sqrt{x} \quad \forall \, x \in (0, 1].
\]
As $g'(x) = 2x + 2d \cdot x^{2d - 1}$, it follows that
\[
    {(g^{-1})'(x)} = \frac{1}{g'(g^{-1}(x))} \geq \frac{1}{ 2 \cdot \sqrt{x} + 2d \cdot (\sqrt{x})^{2d-1}} \geq \frac{1}{(2d+2)\sqrt{x}}, \quad \forall \, x \in (0, 1],
\]
and also that
\[
    {(g^{-1})'(x)} = \frac{1}{g'(g^{-1}(x))} \leq \frac{1}{ 2\cdot \frac{1}{2}\sqrt{x} + 2d \cdot (\frac{1}{2}\sqrt{x})^{2d-1}} \leq \frac{1}{\sqrt{x}}, \quad \forall \, x \in (0, 1].
\]
Finally, we note that $1 \geq \exp(-g^{-1}(x)^\alpha) \geq e^{-1}$ for $x \in (0, 1]$, and putting things together we find that

\begin{equation} \label{EQ:guplow}
    \frac{2 C_\alpha}{\sqrt{x}} \geq g_\#w_2(x) \geq \frac{2 C_\alpha}{(2d+2) \cdot e \cdot \sqrt{x}}, \quad \forall \, x \in (0, 1].
\end{equation}

\medskip\noindent\textbf{Lower bound outside the interval.}
For $x \geq 1$, we have $g^{-1}(x) \leq x^{\frac{1}{2d}}$, and 
\[
{(g^{-1})'(x)} = \frac{1}{g'(g^{-1}(x))} \geq \frac{1}{2\cdot \sqrt[2d]{x} + 2d \cdot (\sqrt[2d]{x})^{2d-1}} \geq \frac{1}{(2d+2)x} \quad \forall \, x \geq 1.
\]
Since $\frac{\alpha}{2d} < \frac{1}{2}$, we may now find a $C'_\alpha > 0$ and a $\delta > 0$ so that
\begin{equation} \label{EQ:glow}
    g_\#w_\alpha(x) \geq 2C_\alpha \cdot \exp(-x^{\frac{\alpha}{2d}}) \cdot \frac{1}{(2d+2)x} \geq C_\alpha' \cdot \exp(-x^{\frac{1}{2} - \delta}) \quad \forall \, x \geq 1.
\end{equation}
For example, this holds for $\delta = \frac{1}{2} \cdot \left(\frac{\alpha}{2d} + \frac{1}{2}\right)$.

\medskip\noindent\textbf{Conclusion.}
It remains to compare our bounds on $g_\#w_\alpha$ to the density $f_\#w_\beta$. The latter can be computed exactly, namely we have
\[
    f_\#w_\beta(x) = C_\beta \cdot \exp(-x^{\frac{\beta}{2}}) \cdot \frac{1}{\sqrt{x}}.
\]
For $x \in (0, 1]$, the desired upper and lower bound on $g_\#w_\alpha(x)$ in terms of $f_\#w_\beta(x)$ follow from~\eqref{EQ:guplow} after noting that ${1 \geq \exp \big(-x^{\frac{\beta}{2}}\big) \geq e^{-1}}$.
For $x \geq 1$, the desired lower bound on $g_\#\gaus_\alpha(x)$ in terms of $f_\#w_\alpha(x)$ follows from~\eqref{EQ:glow} after noting that we can choose $\beta \in (0,1)$ so that $\frac{\beta}{2} > \frac{1}{2} - \delta$.
\end{proof}

\section{Discussion}
We have shown that Lasserre's measure-based upper bounds~\eqref{EQ:ub} for polynomial optimization on $\mainset = \R^n$ do not converge to the minimum of a polynomial $f$ under LSL distributions, already when $f$ has degree 2. Furthermore, we have proven that the modified bounds~\eqref{EQ:ubpf} that use push-forward measures do not converge in this setting, even under a Gaussian distribution (while the standard bounds do). This shows a clear separation between the behaviour of these bounds in the non-compact case, which is not present in the compact setting (cf. \Cref{THM:freelunch}).

\subsection*{Extensions}
While we only exhibit non-convergence in \Cref{THM:main1} and \Cref{THM:main2} for two specific polynomials ($f(x) = x^2$ and $g(x) = x^2 + x^{2d}$, respectively), we suspect that this behavior extends to a broader class of examples. For instance:
\begin{itemize}
    \item The fact that both these polynomials attain their minimum at $x^*=0$ is without loss of generality. Indeed, we have
\[
    \exp(-|x^*|^\alpha) \cdot w_\alpha(x) \leq w_\alpha(x-x^*) \leq \exp(|x^*|^\alpha) \cdot w_\alpha(x),
\]
which shows the asymptotic behaviour of the upper bounds does not change after translation (see \Cref{SEC:pfproof}). 
\item For any two polynomials~$p, q$, any measure $\mu$, and any $k \in \N$, we have
\begin{equation}
\label{EQ:UBaddition}
    \upbound{p + q}{\mu}{k} \geq \upbound{p}{\mu}{k}
    + \upbound{q}{\mu}{k}.
\end{equation}
Assuming $p, q$ attain their minima, this means that nonconvergence of the upper bounds for either $p$ or $q$ implies nonconvergence of the upper bounds for $p + q$, allowing extensions of \Cref{THM:main1}. Note that inequality \eqref{EQ:UBaddition} does not hold in general for the push-forward bounds, and so it is unclear whether \Cref{THM:main2} can be extended in a similar way.
\item While we require the quadratic term in $g(x) = x^2 + x^{2d}$ for our proof of \Cref{THM:main2} to control the behavior of the push-forward measure $g_\#\gaus_\alpha$ near the origin, we believe that the result of \Cref{THM:main2} holds without adding this term as well. \Cref{TAB:example} gives some numerical evidence for this claim when $\alpha = 2$.
\end{itemize}

\subsection*{Acknowledgments}
We thank the anonymous referees for their helpful comments and suggestions.

\begin{table}
\renewcommand{\arraystretch}{1.2}
\begin{tabular}{l|cccccc}
 & $r=4$    & $r=6$    & $r=8$    & $r=10$   & $r=12$   & $r=14$   \\ \hline
\rowcolor[HTML]{EFEFEF} 
$\upboundpf{x^2 + x^6}{\gaus_2}{r}$   & 1.67 & 1.6  & 1.56 & 1.54 & 1.52 & 1.50  \\
$\upbound{x^2 + x^6}{\gaus_2}{2r}$   & 0.22 & 0.14 & 0.10 & 0.08 & 0.07 & 0.06 \\
\hline
\rowcolor[HTML]{EFEFEF} 
$\upboundpf{x^6}{\gaus_2}{r}$  & 1.24 & 1.18 & 1.15 & 1.12 & 1.10  & 1.09 \\
$\upbound{x^6}{\gaus_2}{2r}$ & 0.06 & 0.03 & 0.01 & 0.01 & 0.01 & 0.00 \\ 
\end{tabular} \vspace{0.2cm}
\caption{Standard and push-forward upper bounds for the minimization of $x^2$ and $x^2 + x^6$ over $\R$ w.r.t. a Gaussian distribution. Reported values are rounded to two decimals.}
\label{TAB:example}
\end{table}

\FloatBarrier

\bibliographystyle{plain}
\bibliography{noncon}

\begin{thebibliography}{10}

\bibitem{LSLlowerbounds}
Mark Bun and Thomas Steinke.
\newblock Weighted polynomial approximations: limits for learning and pseudorandomness.
\newblock In {\em Approximation, randomization, and combinatorial optimization. {A}lgorithms and techniques}, volume~40 of {\em LIPIcs. Leibniz Int. Proc. Inform.}, pages 625--644. Schloss Dagstuhl. Leibniz-Zent. Inform., Wadern, 2015.

\bibitem{deKlerkHessLaurent:hypercube}
Etienne de~Klerk, Roxana Hess, and Monique Laurent.
\newblock Improved convergence rates for {L}asserre-type hierarchies of upper bounds for box-constrained polynomial optimization.
\newblock {\em SIAM J. Optim.}, 27(1):347--367, 2017.

\bibitem{deKlerkLaurent:annealing}
Etienne de~Klerk and Monique Laurent.
\newblock Comparison of {L}asserre's measure-based bounds for polynomial optimization to bounds obtained by simulated annealing.
\newblock {\em Math. Oper. Res.}, 43(4):1317--1325, 2018.

\bibitem{deKlerkLaurent:survey}
Etienne de~Klerk and Monique Laurent.
\newblock A survey of semidefinite programming approaches to the generalized problem of moments and their error analysis.
\newblock In {\em World women in mathematics 2018}, volume~20 of {\em Assoc. Women Math. Ser.}, pages 17--56. Springer, Cham, 2019.

\bibitem{deKlerkLaurent:ubworst}
Etienne de~Klerk and Monique Laurent.
\newblock Worst-case examples for {L}asserre's measure-based hierarchy for polynomial optimization on the hypercube.
\newblock {\em Math. Oper. Res.}, 45(1):86--98, 2020.

\bibitem{deKlerkLaurent:sphere}
Etienne de~Klerk and Monique Laurent.
\newblock Convergence analysis of a {L}asserre hierarchy of upper bounds for polynomial minimization on the sphere.
\newblock {\em Math. Program.}, 193(2):665--685, 2022.

\bibitem{deKlerkLaurentSun:upbounds}
Etienne de~Klerk, Monique Laurent, and Zhao Sun.
\newblock Convergence analysis for {L}asserre’s measure-based hierarchy of upper bounds for polynomial optimization.
\newblock {\em Math. Program.}, 162:363--392, 2017.

\bibitem{Lasserre:lowerbound}
Jean~B. Lasserre.
\newblock Global optimization with polynomials and the problem of moments.
\newblock {\em SIAM J. Optim.}, 11(3):796--817, 2000/01.

\bibitem{Lasserre:book1}
Jean~B. Lasserre.
\newblock {\em Moments, positive polynomials and their applications}, volume~1 of {\em Imperial College Press Optimization Series}.
\newblock Imperial College Press, London, 2010.

\bibitem{Lasserre:upbound}
Jean~B. Lasserre.
\newblock A new look at nonnegativity on closed sets and polynomial optimization.
\newblock {\em SIAM J. Optim.}, 21(3):864--885, 2011.

\bibitem{Lasserre:book2}
Jean~B. Lasserre.
\newblock {\em An introduction to polynomial and semi-algebraic optimization}.
\newblock Cambridge Texts in Applied Mathematics. Cambridge University Press, Cambridge, 2015.

\bibitem{Lasserre:upboundpf}
Jean~B. Lasserre.
\newblock Connecting optimization with spectral analysis of tri-diagonal {H}ankel matrices.
\newblock {\em Math. Program.}, 190, 2020.

\bibitem{Lubinsky:survey}
Doron~S. Lubinsky.
\newblock A survey of weighted polynomial approximation with exponential weights.
\newblock {\em Surv. Approx. Theory}, 3:1--105, 2007.

\bibitem{NevaiTotik:markov}
Paul Nevai and Vilmos Totik.
\newblock Weighted polynomial inequalities.
\newblock {\em Constr. Approx.}, 2(2):113--127, 1986.

\bibitem{NevaiTotik:nikolskii}
Paul Nevai and Vilmos Totik.
\newblock Sharp {N}ikolski\u{\i} inequalities with exponential weights.
\newblock {\em Anal. Math.}, 13(4):261--267, 1987.

\bibitem{SlotLaurent:upbound}
Lucas Slot and Monique Laurent.
\newblock Improved convergence analysis of {L}asserre’s measure-based upper bounds for polynomial minimization on compact sets.
\newblock {\em Math. Program.}, 193:831--871, 2020.

\bibitem{SlotLaurent:upbound2}
Lucas Slot and Monique Laurent.
\newblock Near optimal analysis of {L}asserre’s univariate measure-based bounds for multivariate polynomial optimization.
\newblock {\em Math. Program.}, 188:443--460, 2021.

\end{thebibliography}

\appendix

\section{Proof of \texorpdfstring{\Cref{THM:gaussianconvergence}}{Theorem \ref*{THM:gaussianconvergence}}}
\label{SEC:alphaconverge}

In this section, we show how to obtain~\Cref{THM:gaussianconvergence} from the following more general result of Lasserre~\cite{Lasserre:upbound}. This does not require significant new ideas, but we include an explicit derivation here for completeness.
\begin{theorem}[Special case of {\cite[Theorem 3.4]{Lasserre:upbound}}] \label{THM:generalconvergence}
    Let $\varphi$ be a finite Borel measure supported on~$\R^n$, and let $\mu$ be the measure defined by:
    \[
        \mu(B) = \int_{B} \exp(-\sum_{i=1}^n |x_i|) \, \dif \varphi(\x) \quad (B \subseteq \R^n \text{ measurable}).
    \]
    Then, for any polynomial $f \in \R[\x]$, we have $\lim_{r \to \infty}\upbound{f}{\mu}{r} = \min_{\x \in \R^n} f(\x)$.
\end{theorem}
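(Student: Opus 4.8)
Since \Cref{THM:generalconvergence} is quoted as a verbatim special case of \cite[Theorem 3.4]{Lasserre:upbound}, one legitimate route is simply to recall that result; but let me sketch a direct argument, as it pinpoints why the exponential factor is essential. Since $\upbound{f}{\mu}{r}$ is nonincreasing in $r$ and bounded below by $\funcmin$, it suffices to show
\[
\inf\Big\{ \int_{\R^n} f\,\sigma\dif\mu \;:\; \sigma \in \Sigma[\x],\ \int_{\R^n}\sigma\dif\mu = 1 \Big\} \;=\; \funcmin .
\]

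I would argue this by contradiction. Suppose the infimum exceeds $\funcmin$; then there is an $\epsilon > 0$ with $\int_{\R^n}(f - \funcmin - \epsilon)\,\sigma\dif\mu \ge 0$ for every $\sigma \in \Sigma[\x]$, i.e., the linear functional $L : p \mapsto \int_{\R^n} p\,(f - \funcmin - \epsilon)\dif\mu$ on $\R[\x]$ is nonnegative on all squares. The plan is to deduce from this that the signed measure $\dif\nu := (f - \funcmin - \epsilon)\dif\mu$ is in fact \emph{nonnegative}: since $L$ is nonnegative on squares its (generalized Hankel) moment matrix is positive semidefinite, so by the solution of the multivariate moment problem there is a nonnegative measure with the same moments as $\nu$, and --- provided $\nu$ and $\nu^+$ are \emph{determinate} --- this representing measure must be $\nu$ itself, forcing $\nu \ge 0$. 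Thus $f \ge \funcmin + \epsilon$ holds $\mu$-almost everywhere; but $f$ is continuous with $f(\x^*) = \funcmin$ and $\mathrm{supp}(\mu) = \R^n$, so $\{\x : f(\x) < \funcmin + \epsilon\}$ is a nonempty open set of positive $\mu$-measure --- a contradiction.

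\textbf{The crux --- and the only place the hypothesis enters --- is the determinacy input.} Here one uses that $\dif\mu = \exp(-\sum_{i=1}^n|x_i|)\dif\varphi$ with $\varphi$ finite, together with $\|\x\| \le \sum_{i=1}^n|x_i|$, to get for every $c \le 1$:
\[
\int_{\R^n} e^{c\|\x\|}\dif\mu(\x) \;\le\; \int_{\R^n} e^{c\sum_{i=1}^n|x_i|}\,e^{-\sum_{i=1}^n|x_i|}\dif\varphi(\x) \;\le\; \varphi(\R^n) \;<\; \infty .
\]
So $\mu$ --- and any measure $|g|\dif\mu$ with $g$ polynomial, in particular $|f - \funcmin - \epsilon|\dif\mu$ --- has a finite exponential moment, hence is determinate (its Fourier transform extends analytically to a neighbourhood of $\R^n$), which is exactly what the previous paragraph needs; the same bound also supplies the growth (Carleman-type) estimate that makes the multivariate moment problem solvable in the first place. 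I expect this determinacy step, and its multivariate bookkeeping, to be where essentially all the work lies; everything else is formal. Note that for the LSL measures $\gaus_\alpha$ with $\alpha \in (0,1)$ no exponential moment exists, and indeed the conclusion fails there (\Cref{THM:main1}).

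The purpose of isolating \Cref{THM:generalconvergence} in this form is to obtain \Cref{THM:gaussianconvergence}. For $\alpha > 1$ one realizes $\gaus_\alpha$ as the measure $\mu$ above by taking $\dif\varphi = C_\alpha\exp(\sum_{i=1}^n (|x_i| - |x_i|^\alpha))\dif\lambda$, which is a \emph{finite} measure because $|t|^\alpha - |t| \to +\infty$. For the endpoint $\alpha = 1$, where this $\varphi$ degenerates to a multiple of the Lebesgue measure, one cannot invoke \Cref{THM:generalconvergence} literally, but the displayed exponential-moment bound applies verbatim to $\gaus_1$ (whose density is $C_1 e^{-\sum_{i=1}^n|x_i|}$), and that bound is all the argument above really uses.
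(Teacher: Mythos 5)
The paper does not actually prove this statement: it is imported verbatim from \cite[Theorem 3.4]{Lasserre:upbound}, and the appendix only derives \Cref{THM:gaussianconvergence} \emph{from} it. So your first sentence already coincides with the paper's ``proof,'' and your direct sketch is, in substance, a reconstruction of Lasserre's own argument: monotonicity reduces everything to showing that nonnegativity of $L(p) = \int p\,(f-\funcmin-\epsilon)\dif\mu$ on squares forces $f \ge \funcmin + \epsilon$ $\mu$-a.e., with the exponential damping factor entering only through the moment-growth estimate. Two points deserve to be made load-bearing rather than parenthetical. First, the clause ``its moment matrix is positive semidefinite, so by the solution of the multivariate moment problem there is a nonnegative measure with the same moments'' is false as stated for $n \ge 2$: positive semidefiniteness of all moment matrices does \emph{not} by itself yield a representing measure (the multivariate Hamburger problem is not solvable from positivity alone). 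The Carleman/Nussbaum condition you relegate to the end is the hypothesis that delivers \emph{both} existence of the representing measure and determinacy, and it is verified exactly by your exponential-moment bound (which gives $L(x_i^{2k})^{1/2k} = O(k)$). Second, the determinacy step should be run through the Jordan decomposition: with $\nu = \nu^+ - \nu^-$ and $\tilde\nu$ the nonnegative representing measure, the positive measures $\tilde\nu + \nu^-$ and $\nu^+$ share all moments and $\nu^+$ satisfies Carleman, whence $\nu = \tilde\nu \ge 0$. (One also needs ``supported on $\R^n$'' to mean $\mathrm{supp}(\varphi) = \R^n$, as your final contradiction uses full support of $\mu$; this is the intended reading.) With these made explicit your sketch is correct, and your closing observation---that the $\alpha = 1$ case can be handled by the exponential-moment bound directly, bypassing the rescaling trick the paper uses in \Cref{SEC:alphaconverge}---is a genuine, if mild, simplification.
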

Now, let $\mu$ be a probability measure on $\R^n$ with positive density $w$. As noted by Lasserre~\cite[Section 3.2]{Lasserre:upbound}, \Cref{THM:generalconvergence} applies to $\mu$ as long as
\begin{equation} \label{EQ:mucondition}
    \int_{\R^n} \frac{w(\x)}{\exp(-\sum_{i=1}^n |x_i|)} \,\dif \x < \infty,
\end{equation}
in which case $\varphi$ may be chosen via
\[
    \varphi(B) = \int_{B} \frac{w(\x)}{\exp(-\sum_{i=1}^n |x_i|)} \, \dif \x < \infty \quad (B \subseteq \R^n \text{ measurable}).
\]
To prove~\Cref{THM:gaussianconvergence}, we consider $\mu = \gaus_\alpha$, where $\gaus_\alpha$ is the probability measure with density $w_\alpha(\x) \propto \exp(-\sum_{i=1}^n |x_i|^\alpha)$, with $\alpha \geq 1$. 
For these measures, we have
\[
    \int_{\R^n} \frac{w_\alpha(\x)}{\exp(-\sum_{i=1}^n |x_i|)} \,\dif \x \propto \left( \int_\R \exp(-|x|^{\alpha} + |x|) \, \dif x\right)^n
\]
For $\alpha > 1$, we have $|x|^\alpha - |x| \geq \frac{1}{2}|x|^\alpha$ whenever $|x|$ is large enough, which allows us to conclude that $\int_\R \exp(-|x|^{\alpha} + |x|) \dif x < \infty$, meaning $\mu = \gaus_\alpha$ satisfies~\eqref{EQ:mucondition}.

We turn to the case $\alpha = 1$. The measure $\mu = \gaus_1$ clearly does not satisfy~\eqref{EQ:mucondition}. We work around this as follows. Consider the measure $\widehat{\gaus}_1$ with density $\widehat{w}_1(\x) = w_1(2\x)$. That is, $\widehat{w}_1(\x) \propto \exp(-\sum_{i=1}^n 2|x_i|)$. This measure satisfies~\eqref{EQ:mucondition}, as
\[
  \int_{\R^n} \frac{\widehat w_1(\x)}{\exp(-\sum_{i=1}^n |x_i|)} \,\dif \x \propto \int_{\R^n} \exp(-\sum_{i=1}^n |x_i|) \,\dif \x < \infty.
\]
Thus, for any polynomial $f \in \R[\x]$, we have $\lim_{r \to \infty} \upbound{f}{\widehat{\gaus}_1}{r} = \funcmin$. In other words, for any $\varepsilon > 0$, there exists a $\sigma \in \Sigma[\x]$ such that $\int_{\R^n} \sigma(\x) \dif \widehat{\gaus}_1(\x) = 1$ and
\[
    \int_{\R^n} f(\x) \sigma(\x) \, \dif \widehat{\gaus}_1(\x) \leq \funcmin + \epsilon.
\]
Now, for an arbitrary polynomial $g \in \R[\x]$, set $f(\x) = g(2\x)$ in the previous. Then,
\begin{align*}
    \int_{\R^n} g(\x) \frac{\sigma({\x}/2)}{2^n} \dif \gaus_1(\x) &= \int_{\R^n} g(2\y) \sigma(\y)  \dif {\gaus}_1(2\y) = \int_{\R^n} f(\y) \sigma(\y) \dif \widehat{\gaus}_1(\y) \leq \funcmin + \varepsilon, \\
    \int_{\R^n} \frac{\sigma({\x}/2)}{2^n}  \dif \gaus_1(\x) &= \int_{\R^n} \sigma(\y)  \dif {\gaus}_1(2\y) = \int_{\R^n} \sigma(\y)  \dif \widehat{\gaus}_1(\y) = 1.
\end{align*}
That is, $\frac{1}{2^n}\sigma(\x / 2)$ is a sum-of-squares density w.r.t. $\gaus_1$ that achieves objective value at most $\funcmin + \varepsilon = g_{\min} + \varepsilon$ in the program~\eqref{EQ:ub} defining $\upbound{g}{\gaus_1}{\mathrm{deg}(\sigma)/2}$. As $\varepsilon > 0$ was arbitrary, this shows $\lim_{r \to \infty} \upbound{g}{\gaus_1}{r} = g_{\min}$, as required.

\section{Extension to the multivariate case}
\label{SEC:multivariate}
In this section, we show how to extend our proofs of \Cref{THM:main1} and \Cref{THM:main2} for $n=1$ (see \Cref{SEC:oneproof} and \Cref{SEC:pfproof}, respectively) to the general setting. 
The basic idea in both instances is that, since $\gaus_\alpha$ is a product measure on $\R^n$, our univariate counterexamples can be extended directly. For clarity, throughout this section, we write $\vec{\gaus}_\alpha$ to denote the measure~\eqref{DEF:mugamma} on $\R^n$, and $\gaus_\alpha$ for the same measure on $\R$. That is, $\dif \vec{\gaus}_\alpha(\x) = \dif \gaus_\alpha(x_1) \ldots \dif \gaus_\alpha(x_n)$. 
\begin{proposition}
    Let $\alpha > 0$. Let $f \in \R[x]$ be a univariate polynomial, and let $\multf \in \R[\x]$ be the $n$-variate polynomial given by $\multf(\x) = f(x_1)$. Then, for any $r \in \N$, 
    \begin{align}
    \label{EQ:productnormal}
    \upbound{\multf}{\vec{\gaus}_\alpha}{r} &= \upbound{f}{{\gaus}_\alpha}{r},\\
    \label{EQ:productpf}
    \upboundpf{\multf}{\vec{\gaus}_\alpha}{r} &= \upboundpf{f}{{\gaus}_\alpha}{r}.
    \end{align}
As a consequence, the univariate versions of \Cref{THM:main1} and \Cref{THM:main2} proved above extend to the multivariate setting directly.
\end{proposition}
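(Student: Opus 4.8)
The plan is to prove the two identities \eqref{EQ:productnormal} and \eqref{EQ:productpf} by exhibiting explicit bijections between feasible solutions of the univariate and multivariate programs that preserve both the objective value and the normalization constraint, using crucially that $\vec{\gaus}_\alpha = \gaus_\alpha^{\otimes n}$ is a product measure.

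For \eqref{EQ:productnormal}: the inequality $\upbound{\multf}{\vec{\gaus}_\alpha}{r} \leq \upbound{f}{\gaus_\alpha}{r}$ is the easy direction. Given a feasible $\sigma \in \Sigma[x]_{2r}$ for the univariate program, lift it to $\vec{\sigma}(\x) := \sigma(x_1) \in \Sigma[\x]_{2r}$. Since $\multf(\x) = f(x_1)$ depends only on $x_1$, Fubini/the product structure gives
\[
    \int_{\R^n} \multf(\x)\vec{\sigma}(\x)\dif\vec{\gaus}_\alpha(\x) = \int_\R f(x_1)\sigma(x_1)\dif\gaus_\alpha(x_1),
\]
and likewise the normalization integral reduces to the univariate one (the remaining $n-1$ integrals each contribute $1$). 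For the reverse inequality, take any feasible $\vec{\sigma} \in \Sigma[\x]_{2r}$ for the multivariate program and define $\bar{\sigma}(t) := \int_{\R^{n-1}} \vec{\sigma}(t, x_2, \ldots, x_n)\dif\gaus_\alpha(x_2)\cdots\dif\gaus_\alpha(x_n)$; this is a univariate polynomial of degree $\leq 2r$, and the main point to check is that it is a sum of squares. This follows because integrating a sum of squares against a product of positive measures yields a nonnegative univariate polynomial, and every nonnegative univariate polynomial is a sum of squares; moreover the degree bound is preserved, so $\bar\sigma \in \Sigma[x]_{2r}$. Again by the product structure, $\bar\sigma$ has the same objective value and normalization as $\vec\sigma$, proving $\upbound{f}{\gaus_\alpha}{r} \leq \upbound{\multf}{\vec{\gaus}_\alpha}{r}$.

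For \eqref{EQ:productpf}: here I would argue via the push-forward formulation \eqref{EQ:ubpfuni}. The key observation is that the push-forward measure $(\multf)_\#\vec{\gaus}_\alpha$ on $\R$ equals $f_\#\gaus_\alpha$: for any measurable $A \subseteq \R$,
\[
    (\multf)_\#\vec{\gaus}_\alpha(A) = \vec{\gaus}_\alpha\big(\{\x : f(x_1) \in A\}\big) = \gaus_\alpha(f^{-1}(A)) \cdot 1^{n-1} = f_\#\gaus_\alpha(A),
\]
again using that $\vec\gaus_\alpha$ is a product and the constraint involves only $x_1$. Since both $\upboundpf{\multf}{\vec{\gaus}_\alpha}{r}$ and $\upboundpf{f}{\gaus_\alpha}{r}$ are, by \eqref{EQ:ubpfuni}, the infimum of $\int_\R x s(x)\dif\nu(x)$ over $s \in \Sigma[x]_{2r}$ with $\int_\R s\dif\nu = 1$, and the relevant measure $\nu$ is the same in both cases, the two quantities are literally equal. (Alternatively, one specializes the argument for \eqref{EQ:productnormal} to densities of the form $s \circ \multf$.)

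The consequence is then immediate: \Cref{PROP:main1proof} gives a polynomial $f$ (namely $f(x) = x^2$) and $\epsilon > 0$ with $\upbound{f}{\gaus_\alpha}{r} \geq \funcmin + \epsilon$ for all $r$ when $\alpha \in (0,1)$; applying \eqref{EQ:productnormal}, the lifted polynomial $\multf(\x) = x_1^2$ satisfies $\upbound{\multf}{\vec{\gaus}_\alpha}{r} \geq \funcmin + \epsilon$ for all $r$, since $\multf$ attains the same minimum $\funcmin$ at $(0,\ldots,0)$. This proves \Cref{THM:main1} for general $n$. Identically, \eqref{EQ:productpf} together with the univariate case of \Cref{THM:main2} (for $g(x) = x^2 + x^{2\lceil\alpha\rceil+2}$) yields the multivariate case of \Cref{THM:main2}. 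I expect the only nonroutine step to be the verification that the partial integral $\bar\sigma$ of a multivariate sum of squares is itself a univariate sum of squares of the correct degree — everything else is bookkeeping with Fubini's theorem on a product measure.
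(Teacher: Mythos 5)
Your proposal is correct, and for the easy direction of \eqref{EQ:productnormal} as well as for all of \eqref{EQ:productpf} it coincides with the paper's argument (lifting a univariate density to $\vec\sigma(\x)=\sigma(x_1)$, and observing $\vec f_\#\vec\gaus_\alpha=f_\#\gaus_\alpha$ so that \eqref{EQ:ubpfuni} makes the push-forward bounds literally identical). Where you genuinely diverge is in the direction $\upbound{\multf}{\vec\gaus_\alpha}{r}\geq\upbound{f}{\gaus_\alpha}{r}$: you marginalize a feasible $\vec\sigma$ to $\bar\sigma(t)=\int_{\R^{n-1}}\vec\sigma(t,x_2,\ldots,x_n)\,\dif\gaus_\alpha(x_2)\cdots\dif\gaus_\alpha(x_n)$ and invoke the fact that a nonnegative univariate polynomial of degree at most $2r$ is a sum of squares in $\Sigma[x]_{2r}$, producing an explicit feasible univariate density with the same objective value. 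The paper instead avoids constructing any univariate solution: it fixes $x_2,\ldots,x_n$, notes that the slice $x\mapsto\vec\sigma(x;x_2,\ldots,x_n)$ is itself a univariate sum of squares of degree at most $2r$, applies the definition of $\upbound{f}{\gaus_\alpha}{r}$ slice-wise, and integrates the resulting inequality over the remaining variables. Both arguments are valid and rest on the product structure of $\vec\gaus_\alpha$; yours buys an explicit correspondence between optimal solutions at the price of the univariate Positivstellensatz (nonnegative $=$ SOS on $\R$, which preserves the degree bound since a nonnegative polynomial has even degree), while the paper's slice-wise bound is marginally more elementary and would survive in settings where nonnegativity does not imply a sum-of-squares representation. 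Your deduction of the multivariate versions of \Cref{THM:main1} and \Cref{THM:main2} from the two identities matches the paper.
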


\begin{proof}
    We first establish~\eqref{EQ:productnormal}.
    Let $\vec{\sigma} \in \Sigma[\x]_{2r}$ be an $n$-variate sum of squares, and assume $\int \vec{\sigma} \dif \vec{\gaus}_\alpha = 1$. For fixed ${x_2, \ldots, x_n \in \R}$, the (univariate) polynomial $\sigma(x) = \vec{\sigma}(x; x_2, \ldots, x_n)$ is a sum of squares of degree at most~$2r$. Therefore, 
    \[
        \int_{\R} f(x) \, \sigma(x; x_2, \ldots, x_n) \dif\gaus_\alpha(x) \geq \upbound{f}{\gaus_\alpha}{r} \cdot \int_{\R} \sigma(x; x_2, \ldots, x_n) \dif \gaus_\alpha(x).
    \]
    Using this, and the fact that $\vec{\sigma}$ is a density w.r.t. $\vec{\gaus}_\alpha$, we find that
    \begin{align*}
        \int_{\R^n} \multf(\x) \, \vec{\sigma}(\x) \dif\vec{\gaus}_\alpha(\x) 
        &= \int_{\R}\ldots\int_{\R} f(x_1) \, \sigma(x_1; x_2, \ldots, x_n) \dif\gaus_\alpha(x_1) \ldots \dif\gaus_\alpha(x_n) 
        \\
        &\geq \upbound{f}{\gaus_\alpha}{r} \cdot \int_{\R}\ldots\int_{\R} \sigma(x_1; x_2, \ldots, x_n) \dif\gaus_\alpha(x_1) \ldots \dif\gaus_\alpha(x_n)
        \\
        &= \upbound{f}{\gaus_\alpha}{r} \cdot \int_{\R^n} \vec{\sigma}(\x) \dif\vec{\gaus}_\alpha(\x) = \upbound{f}{\gaus_\alpha}{r}.
    \end{align*}
    But this implies that $\upbound{\multf}{\vec{\gaus}_\alpha}{r} \geq \upbound{f}{\gaus_\alpha}{r}$. For the other inequality, note that any univariate sum of squares $\sigma \in \Sigma[x]_r$ defines a multivariate sum of squares $\vec{\sigma} \in \Sigma[\x]_r$ by $\vec{\sigma}(\x) = \sigma(x_1)$. Now, we have that
    \begin{align*}
    \int_{\R^n} \multf(\x) \vec{\sigma}(\x) \dif
    \vec{\gaus}_\alpha(\x) &= \int_\R f(x) \sigma(x) \dif \gaus_\alpha(x), \, \text{and} \\ 
    \int_{\R^n} \vec{\sigma}(\x) \dif
    \vec{\gaus}_\alpha(\x) &= \int_{\R} \sigma(x) \dif \gaus_\alpha(x),
    \end{align*}
    and conclude that $\upbound{\multf}{\vec{\gaus}_\alpha}{r} \leq \upbound{f}{\gaus_\alpha}{r}$.
    
    Next, we turn to equation~\eqref{EQ:productpf}. Note that, for any $B \subseteq \R$ measurable, we have that $\vec{f}^{-1}(B) = f^{-1}(B) \times \R^{n-1} \subseteq \R^n$. But that means that
    \[
        \gaus_\alpha\big(f^{-1}(B)\big) = \vec{\gaus}_\alpha\big(\vec{f}^{-1}(B)\big),
    \]
    and so $f_\#\gaus_\alpha = \vec{f}_\#\vec{\gaus}_\alpha$. From~\eqref{EQ:ubpfuni}, it follows that $\upboundpf{\multf}{\vec{\gaus}_\alpha}{r} = \upboundpf{f}{{\gaus}_\alpha}{r}$.
\end{proof}

\end{document}